\theoremstyle{definition}
\def\be{\begin{eqnarray}}
\def\ee{\end{eqnarray}}
\def\matZ{{\mathbb{Z}}}
\def\matR{{\mathbb{R}}}
\def\matQ{{\mathbb{Q}}}
\def\matC{{\mathbb{C}}}
\newcommand{\Or}{\textsf{O}}
\def\mc{{\bs \nu}_{b}}
\newcommand{\Stab}{\mathrm{Stab}}
\newcommand{\bA}{\mathsf{A}}
\newcommand{\bZ}{\mathsf{Z}}
\newcommand{\bT}{\mathsf{T}}
\newcommand{\bK}{\mathsf{K}}
\newcommand{\rrr}{\color{black}}
\newcommand{\Lie}{\mathrm{Lie}}
\newcommand{\Pic}{\mathrm{Pic}}
\newcommand{\cB}{\mathscr{B}}
\newcommand{\cE}{\mathscr{E}}
\let\bs\boldsymbol
\def\ss{\hat{\bs{s}} }
\def\rr{\bs{r}}
\def\nn{\bs{n}}
\def\wall{\mathsf{w}}
\def\ind{\mathrm{ind}}
\theoremstyle{definition}
\newtheorem{Definition}{Definition}
\newtheorem{Proposition}{Proposition}
\newtheorem{Lemma}{Lemma}
\newtheorem{Corollary}{Corollary}
\newtheorem{Conjecture}{Conjecture}
\newtheorem{Theorem}{Theorem}
\newtheorem{Note}{Note}
\newcommand{\fC}{\mathfrak{C}} 
\newcommand{\somespecialrotate}[3][]{%
\begingroup
\sbox\@tempboxa{#3}%
\@tempdima=.5\wd\@tempboxa
\sbox\@tempboxa{\rotatebox[#1]{#2}{\usebox\@tempboxa}}%
\advance\@tempdima by -.5\wd\@tempboxa
\mbox{\hskip\@tempdima\usebox\@tempboxa}%
\endgroup}
\begin{document}
\title{Pursuing quantum difference equations I:
stable envelopes of subvarieties}
\author{Yakov Kononov, Andrey Smirnov}
\date{}
\maketitle
\thispagestyle{empty}
	
\begin{abstract}
Let $X$ be a symplectic variety equipped with an action of a torus~$\bA$. Let $\mc \subset \bA$ be a finite cyclic subgroup. We show that K-theoretic stable envelope of the fixed point set $X^{\mc}\subset X$ can be obtained via a limit of the elliptic stable envelopes of $X$.  An example of $X$ given by the Hilbert scheme of points in the complex plane is considered in detail.   
\end{abstract}
	
\setcounter{tocdepth}{2}
	
\section{Introduction}	
\subsection{}
The development of the theory of elliptic stable envelope was initiated by M.~Aganagic and A.~Okounkov in \cite{AOElliptic}. Since then the theory has found  remarkable applications to various areas of mathematics. To list just a few, stable envelopes can be related to the so-called Bethe vectors in integrable models \cite{OkBethe}, they provide a new description of standard bases for quantum groups \cite{Hik}, they manifest themselves as weight functions for solutions of the qKZ equations \cite{varch,RTV,konno1,konno2}, they provide explicit formulas for the R-matrices of various algebras (Yangians, quantum loop algebras and elliptic quantum groups) \cite{MO,Neg,ZinByk,InstR}. Stable envelopes also find important applications in Donaldson-Thomas theory of threefolds \cite{KOO}, and combinatorics of symmetric polynomials~\cite{NegGor,Wen,NegPier,GenJacks,MS}. We note also that the elliptic stable envelopes appear naturally in physics of $3d$ $\mathcal{N}=4$ supersymmetric gauge theories \cite{DedushNekr,BZ2}, they also describe the monodromy of partitions functions, studied for instance in \cite{KorGa,Dim}.

Initially, the theory was built as a tool to describe the monodromy of qKZ-like equations and quantum difference equations associated with the quiver varieties \cite{OS}. These ideas were outlined in \cite{OkBethe,AOF} as a generalization  of earlier  developments, such as for instance \cite{TV1,TV2,TV3,EV,TolMon}.

In geometric {\rrr the} approach, qKZ equations and quantum difference equations describe $q$-holonomic modules generated by  {\it vertex functions} of symplectic varieties \cite{pcmilect}. These developments revealed a deep interaction between Gromov-Witten type enumerative theories and representation theory. We refer to \cite{Pushk1,Pushk2,KorZe} for recent progress in this direction, see also \cite{Dink1,Dink2} for the description of vertex functions in more specific situations.

The elliptic stable envelope relates the enumerative invariants of  symplectic varieties to enumerative invariants of the {\it symplectic dual} varieties \cite{AOElliptic}. This suggests that  stable envelopes provide a natural tool to work with symplectic duality (or 3d-mirror symmetry). This idea was first emphasised by A. Okounkov in his talk ``{\it Enumerative symplectic duality}'' during the 2018 MSRI  workshop ``Structures in Enumerative Geometry'' (the talk is accessible from MSRI web-page) and further examined in several special cases in~\cite{RSVZ1,MirSym2, SZ}.

\subsection{} 
An interesting problem in {\rrr the} enumerative geometry of symplectic varieties is to find a better description of the corresponding $q$-difference equations.  Even though this problem has been  partly addressed in \cite{pcmilect,OS},  the treatment developed there is not entirely geometric and  relies on the techniques of Hopf algebras invented earlier in \cite{EV}. 

The analysis of the {\it monodromy} of these equations leads to a new geometric approach, which describes the building blocks of the $q$-difference equations (for instance the dynamical wall-crossing operators, see Section 5.3.1 in \cite{OS}) by special limits of the elliptic stable envelopes. This paper was mainly motivated by this idea and we consider it as a first natural step in this research direction.  Here we study special limits of the elliptic stable envelopes which arise in the following way: let $\bA$ be a torus acting on a symplectic variety $X$ by automorphisms, let $\mc \subset \bA$ be a cyclic subgroup of finite order. The inclusion $X^{\mc} \subset X$ induces a morphism of the elliptic cohomology schemes
$i: \textrm{Ell}_{\bT}(X^{\mc})\to \textrm{Ell}_{\bT}(X)$.
In this setup, the elliptic cohomology scheme of the $\mc$-fixed subset admits certain transformations $\omega_{\wall}: \textrm{Ell}_{\bT}(X^{\mc})\to \textrm{Ell}_{\bT}(X^{\mc})$ which preserve its structure. These transformations act by shifting the equivariant parameters $\omega_{\wall}: a \to a q^{\wall}$ by special elements  $\wall \in \Lie_{\matQ}(\bA)$ ($q$ denotes the modular parameter of the underlying elliptic curve $E$).

In Theorem \ref{manth} we prove that the elliptic stable envelope of $X$ twisted by $\omega_{\wall}$ in the limit $q=0$ converges to the K-theoretic stable envelope of the $\mc$-fixed subvarieties. Schematically,
\begin{small}
\be \label{stblim}
\textrm{Elliptic stable envelope of $X$ } \stackrel{q\to 0,z\to 0_{\mathfrak{D}}}{\longrightarrow} \textrm{K-theoretic stable envelope of $X^{\mc}$}
\ee
\end{small}
where $z\to 0_{\mathfrak{D}}$ denotes certain vanishing of K\"ahler parameters which controls the slope of the K-theoretic stable envelopes, see (\ref{limitdef}). 

In Section \ref{hssec} we apply this result to $X$ given by the Hilbert scheme of points in $\matC^2$. In this case the components of the fixed set $X^{\mc}$
are isomorphic to the Nakajima quiver varieties associated with cyclic quivers.    Theorem~\ref{hsthm} then establishes exact correspondence between
stable envelopes for these varieties. Our results here are related to the conjectures proposed in \cite{NegGor}, and we expect that their conceptual proofs will be obtained along these lines.

In the last section we consider a special case of $\mc$ given by a subgroup of framing torus of a Nakajima quiver variety $X$.  In this situation the twists $\omega_{\wall}$ and fixed sets $X^{\mc}$ are labeled by certain arrangement of hyperplanes in $\Lie_{\matR}(\bA)$.  The K-theoretic stable envelopes of $X^{\mc}$ arising  in the limit (\ref{stblim}) for all choices of $\omega_{\wall}$ are described by Theorem \ref{manth2}.

\section*{Acknowledgements}
We thank our teacher Andrei Okounkov for drawing our attention to the problem. We also thank Boris Feigin, Henry Liu and Zijun Zhou for useful discussions. 
This work was initiated during the AMS Mathematics Research Community meeting on Geometric Representation Theory and Equivariant Elliptic Cohomology at Rhode Island in June 2019 and the workshop ``Elliptic cohomology days'' at the University of Illinois, Urbana-Champaign. The authors are indebted to the organizers and all participants for very fruitful discussions and creative scientific atmosphere. 

The work of A. Smirnov is supported by NSF grant DMS - 2054527, by the Russian Science Foundation under grant 19-11-00062 and is performed in Steklov Mathematical Institute of Russian Academy of Sciences.

\section*{Data availability}
Data sharing not applicable to this article as no datasets were generated or analysed during the current study.

\section{Elliptic stable envelopes}

\subsection{} 
Let $X$ be a symplectic variety with an action of algebraic torus $\bT$. As usual, we assume that the action of $\bT$ scales the symplectic form with a character which we denote $\hbar^{-1}$. We denote by $\bA:=\ker(\hbar^{-1})\subset \bT$ the codimension one subtorus preserving the symplectic form.   

We assume that $X^{\bT}$ is finite. We assume also that {\rrr the} elliptic stable envelope exists for $X$. It is well known that the class of symplectic varieties satisfying this condition is quite large. For example, it includes all Nakajima quiver varieties, see Theorem 3 in \cite{AOElliptic}. 

For the definition of the elliptic stable envelope and basics of elliptic cohomology we refer to Sections 2-3 in \cite{AOElliptic} and Section 2 in \cite{EllipticHilbert}, in particular, Subsection 2.13 in \cite{EllipticHilbert} deals with the case of finite $X^{\bT}$.

\subsection{}
Let $\lambda \in X^{\bT}$ be a fixed point. By our assumption, for any choice of a chamber $\fC \subset \Lie_{\matR}(\bA)$ and a polarization $P\in K_{\bT}(X)$ (for definitions see, for instance, Section 2 of \cite{EllipticHilbert}) we have well defined elliptic stable envelope $\Stab^{Ell}_{X,\fC,P}(\lambda)$. By definition, $\Stab^{Ell}_{X,\fC,P}(\lambda)$ is a section of a certain bundle (Section 2.13 in \cite{EllipticHilbert}) over the extended elliptic cohomology scheme
$$
\mathsf{E}_{\bT}(X)=\textrm{Ell}_{\bT}(X)\times \cE_{\Pic(X)} 
$$
where $\textrm{Ell}_{\bT}(X)$ denotes the $\bT$-equivariant elliptic cohomology scheme of $X$ and $\cE_{\Pic({X})}=E\otimes_{\matZ} \Pic(X)$ 
for a family of elliptic curves $E=\matC^{\times}/q^{\matZ}$ over the punctured disc $0<|q|<1$. 

Recall that $\mathsf{E}_{\bT}(X)$ is a scheme over the extended elliptic cohomology scheme of a point:
$$
\mathsf{E}_{\bT}(X)\stackrel{\pi}{\longrightarrow}\cB_{\bT,X}:=\textrm{Ell}_{\bT}(pt)\times   \cE_{\Pic(X)}\cong E^{\dim(\bT)+\textrm{rk}(\Pic(X))}. 
$$
The coordinates on the abelian variety $\textrm{Ell}_{\bT}(pt)$ are usually called the equivariant parameters. We denote them by $a$ (for those corresponding to $\bA$) and $\hbar$ (cosponsoring to $\bT/\bA$). The coordinates in $\cE_{\Pic(X)}$ are referred to as the K\"ahler parameters and are denoted by $z$. 

\subsection{} 
We recall that the elliptic cohomology scheme has the following structure:
$$
\mathsf{E}_{\bT}(X)=\left(\coprod\limits_{\lambda\in X^{\bT}} \widehat{\Or}_{\lambda}\right)/ \Delta 
$$
where $\widehat{\Or}_{\lambda}\cong \cB_{\bT,X}$ and $\Delta$ denotes the data describing how the fixed point components $\widehat{\Or}_{\lambda}$ glue  to form $\mathsf{E}_{\bT}(X)$, see Section 2 in \cite{RSVZ1}. We  denote the restriction of the elliptic stable envelope to the fixed point components by
$$
T_{\lambda,\mu}(a,z)=\left.\Stab^{Ell}_{X,\fC,P}(\lambda)\right|_{\widehat{\Or}_{\mu}}. 
$$
The components $T_{\lambda,\mu}(a,b)$ represent sections of certain line bundles over the abelian varieties $\widehat{\Or}_{\mu}$ and thus can be expressed in terms of the odd Jacobi theta function associated with $E$:
\be \label{prodform}
\vartheta(x)=(x^{1/2}-x^{-1/2})\prod\limits_{i=1}^{\infty} (1-x q^{i})(1-q^{i}/x).
\ee 
Note that in the multiplicative notations odd means
\be \label{oddthe}
\vartheta(1/x)=-\vartheta(x).
\ee
The quasiperiods of these sections are governed by 
\be \label{thetatransf}
\vartheta(x q)=-\dfrac{1}{x\sqrt{q}} \vartheta(x).
\ee
Given a K-theory class $P$ we will denote by $\Theta(P)$ the corresponding elliptic Euler class. For example, if the non-constant part of $P$ is of the form:
$$
\sum_i a_i - \sum_j b_j \in K_{\bA}(pt)
$$
where $a_i,b_i$ are some non-trivial characters of $\bA$ then, explicitly
\be \label{thdef}
\Theta(P)=\dfrac{\prod\limits_{i} \vartheta(a_i)}{\prod\limits_{j} \vartheta(b_j)}.
\ee
We note that the $q=0$ limit equals:
$
\left.\vartheta(x)\right|_{q=0}=x^{1/2}-x^{-1/2}.
$

\subsection{}
By definition of the elliptic stable envelope, the sections $T_{\lambda,\mu}(a,z)$ are {\it holomorphic} in equivariant parameters $a$. 
The important feature of sections $T_{\lambda,\mu}(a,z)$ is that they are also {\it balanced} in a {\rrr suitable normalization \eqref{balsec}}.

Let
$\cE=E^{n}\times E^{m}$ be an abelian variety. We denote the coordinates   on the factors by $a=(a_1,\dots, a_n)$ and $z=(z_1,\dots,z_m)$.  Let $s(a,z)$ be a  meromorphic section of degree zero line bundle over $\cE$. 

\begin{Definition}
	{\it We say that $s(a,z)$ is balanced in the variables $a$ if in coordinates it can be represented in the following form:
		\be \label{balform}
		s(a,z)=\sum \, \prod\limits_{l} \dfrac{\vartheta(a^{l}\dots)}{\vartheta(a^l\dots)}
		\ee	
		where $a^l=a_1^{l_1}\dots a_n^{l_n}$ denote monomials in the variables $a$ and $\dots$ stands for monomials in the rest of variables $z$. }
\end{Definition}

For example, the following section over $E\times E$:
$$
s(a,z)={\frac {\vartheta \left( a z \right) }{\vartheta \left( a \right) \vartheta
		\left( z \right) }}+{\frac {\vartheta \left( {a}^{2} z \right) \vartheta
		\left( a \right) }{\vartheta \left( {a}^{2} \right) \vartheta \left( a z
		\right) }}
$$
is balanced in the variable $a$. It is also balanced in the variable $z$. 
But it is not balanced in variables $(a,z)$.
\subsection{} 
The elliptic functions of the form (\ref{balform}) have good behavior in the limit $q\to 0$:

\begin{Lemma} \label{wlim}
	{\it For any $\wall=(\wall_1,\dots,\wall_n) \in \matR^{n}$ and a section $s(a,z)$ balanced in variables $a$ the following limit exists
		\be \label{exlims}
		\sqrt{z}\lim\limits_{q\to 0} s(a q^{\wall}
		,z) \in \matQ(a,z)
		\ee
		where $a q^{\wall}=(a_1 q^{\wall_1},\dots,a_n q^{\wall_n})$, $\sqrt{z}$ denotes the square root of some monomial in variables $z_1,\dots,z_m$ and $\matQ(a,z)$ denotes the ring of rational functions.} 	
\end{Lemma}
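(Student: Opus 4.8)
The plan is to reduce everything to the asymptotics of a single theta-quotient under the shift $a\mapsto aq^{\wall}$, exploiting the fact that balancedness forces the numerator and denominator of each factor to carry the \emph{same} monomial $a^{l}$. First I would fix one summand $\prod_{l}\vartheta(a^{l}w_{l})/\vartheta(a^{l}w'_{l})$ of the balanced form, where $w_{l},w'_{l}$ are monomials in $z$. Under $a\mapsto aq^{\wall}$ each argument acquires the factor $q^{t_{l}}$ with $t_{l}=\langle l,\wall\rangle=\sum_{i}l_{i}\wall_{i}$, and this power is identical in numerator and denominator. Writing $t_{l}=k_{l}+s_{l}$ with $k_{l}=\lfloor t_{l}\rfloor$ and $s_{l}\in[0,1)$, I would peel off the integer part using the iterate $\vartheta(uq^{k})=(-1)^{k}u^{-k}q^{-k^{2}/2}\vartheta(u)$ of (\ref{thetatransf}). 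Because the same $t_{l}$ (hence the same $k_{l}$) occurs top and bottom, all of the factors $(-1)^{k_{l}}$, $q^{-k_{l}^{2}/2}$ and $(a^{l}q^{s_{l}})^{-k_{l}}$ cancel in the quotient, leaving
\[
\frac{\vartheta(a^{l}w_{l}q^{t_{l}})}{\vartheta(a^{l}w'_{l}q^{t_{l}})}=\Big(\frac{w'_{l}}{w_{l}}\Big)^{k_{l}}\frac{\vartheta(a^{l}w_{l}q^{s_{l}})}{\vartheta(a^{l}w'_{l}q^{s_{l}})},\qquad s_{l}\in[0,1).
\]

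Next I would take $q\to 0$ on the reduced quotient. Using the product expansion (\ref{prodform}) one checks that the infinite product tends to $1$, since every $q$-power occurring has strictly positive exponent once $s_{l}\in[0,1)$. Hence if $s_{l}=0$ the factor converges to $\ss(a^{l}w_{l})/\ss(a^{l}w'_{l})\in\matC(a,z)$, while if $s_{l}\in(0,1)$ the arguments tend to $0$, the leading behaviour is $\vartheta(y)\sim -y^{-1/2}$, and the divergent prefactors $q^{-s_{l}/2}$ cancel between numerator and denominator (again by equality of $s_{l}$), leaving the finite limit $(w'_{l}/w_{l})^{k_{l}+1/2}$. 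Thus every factor, and hence every summand, has a finite limit; the only non-rational contributions are the square roots $(w'_{l}/w_{l})^{1/2}$ of monomials in $z$. Multiplying the summand by the square root of the $z$-monomial $\prod_{l:\,s_{l}\in(0,1)}(w_{l}/w'_{l})$ clears these roots and places the limit in $\matC(a,z)$.

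The main obstacle is the final step: showing that one and the same $\sqrt z$ works simultaneously for \emph{all} summands, as the statement requires. Here I would use that $s(a,z)$ is a genuine meromorphic section of a single degree-zero line bundle rather than a formal sum of quotients: all summands share one automorphy factor under the lattice translations $z_{j}\mapsto qz_{j}$, which pins down the fractional $z$-weights and forces the monomials $\prod_{l:\,s_{l}\in(0,1)}(w'_{l}/w_{l})$ attached to different summands to coincide modulo squares of $z$-monomials. A single $\sqrt z$ then suffices, and the existence of the limit (\ref{exlims}) follows. I expect the bookkeeping of these automorphy factors, and the verification that the balanced decomposition is compatible with the line-bundle structure, to be the only genuinely delicate point; the asymptotic analysis of the individual factors is routine once one has the quasi-periodicity (\ref{thetatransf}) and the product formula (\ref{prodform}).
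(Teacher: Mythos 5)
Your proposal is correct in substance and follows essentially the same route as the paper: the paper simply packages your factor-by-factor asymptotics into the single identity (\ref{thetlim}), whose integer/non-integer dichotomy is exactly your split $s_l=0$ versus $s_l\in(0,1)$, and declares the lemma to follow immediately. One bookkeeping slip should be fixed: when $s_l=0$ the factor does \emph{not} converge to an element of $\matC(a,z)$. Indeed $\ss(a^l w_l)/\ss(a^l w'_l)=\frac{1-a^l w_l}{1-a^l w'_l}\,(w'_l/w_l)^{1/2}$, so the integer-shift factors also leave behind a half-power of a $z$-monomial; this is visible in the paper's own formula, where the integer case of (\ref{thetlim}) carries the factor $z^{-\wall-1/2}$. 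Consequently your clearing monomial must run over \emph{all} factors $l$, not only those with $s_l\in(0,1)$. With that correction, your summary sentence (``the only non-rational contributions are the square roots $(w'_l/w_l)^{1/2}$'') is exactly right and the step goes through unchanged.

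Your final paragraph addresses a point that the paper's one-line proof passes over in silence: why one and the same $\sqrt{z}$ works for every summand. Your mechanism is the correct one, and it can be made completely concrete: iterating (\ref{thetatransf}) gives $\vartheta(uq^k)=(-1)^k u^{-k} q^{-k^2/2}\vartheta(u)$, so under $z_j\to q z_j$ a summand acquires the sign $(-1)^{\deg_{z_j}\prod_l (w_l/w'_l)}$ among its automorphy data; equality of automorphy factors across summands therefore forces $\deg_{z_j}\prod_l(w'_l/w_l)$ to be independent of the summand modulo $2$ for each $j$, which is precisely the statement that the monomials under the square roots agree modulo squares. This ``all terms are sections of the same line bundle'' reasoning is the same device the paper itself invokes later, in the proof of Lemma \ref{limexlem}, so your use of it is consistent with the paper's framework; spelling it out here is a genuine improvement on the paper's terse argument.
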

\begin{proof}
	Let $\wall\in \matR$. 	The Lemma follows immediately from the following identity
	\be \label{thetlim}
	\lim\limits_{q\to 0} \dfrac{\vartheta(z a  q^{\textsf{w}})}{\vartheta(a q^{\textsf{w}})} = \left\{\begin{array}{ll} 
		z^{-\lfloor\textsf{w}\rfloor - 1/2 }, & \textsf{w} \not\in \matZ  \\
		\dfrac{1- a z}{1-a} z^{-\textsf{w}-1/2}, & \textsf{w} \in \matZ
	\end{array}	\right.
	\ee
	where $\lfloor \wall \rfloor$ stands for the integral part of $\wall$. This identity, in turn, can be derived from (\ref{prodform}).	
\end{proof}

Natural examples of balanced sections are provided by restrictions of the elliptic stable envelopes to the components of the fixed points. For $\lambda,\mu \in X^{\bT}$ let us consider the following section
\be
\label{balsec}
s(a,z)=\dfrac{T_{\lambda,\mu}(a,z)}{\Theta(P_\mu)}=\dfrac{\left.\Stab^{Ell}_{X,\fC,P}(\lambda)\right|_{\mu}}{\Theta(P_{\mu})}.
\ee
Here $a$ and $z$ denote the equivariant and K\"ahler parameters, which are the coordinates on abelian variety $\widehat{\Or}_{\mu}$ and  $P_{\mu}$ denotes the restriction of the polarization $P$ the fixed point $\mu$.  

\begin{Proposition} 
	{\it If $X$ is a hypertoric variety then (\ref{balsec})
		
		\noindent 1) is balanced in the equivariant parameters $a$,
		
		\noindent 2) is balanced in the K\"ahler parameters $z$.
	
	    \noindent 3) has poles separately in $a$ and $z$
}  
\end{Proposition}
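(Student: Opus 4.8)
The plan is to reduce everything to the explicit product formula for the elliptic stable envelope of a hypertoric variety. Recall that such an $X$ is a symplectic reduction of $T^*\matC^n$ by a subtorus $\bK\subset(\matC^\times)^n$, so that the fixed points $\lambda,\mu\in X^{\bT}$ are indexed by the feasible sign vectors of the defining hyperplane arrangement, and the equivariant parameters $a$ together with $\hbar$ are restrictions of the coordinate characters of $(\matC^\times)^n$. In this abelian situation the restriction $T_{\lambda,\mu}(a,z)$ factorizes as a product over the $n$ coordinate directions of elementary building blocks, each of which is a ratio of theta functions whose arguments are monomials in $a$, $\hbar$ and a single K\"ahler parameter. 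I would start by writing this formula out and recording, for every factor, the precise equivariant monomial and K\"ahler monomial occurring in its numerator and in its denominator.

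First I would prove part 1). Dividing $T_{\lambda,\mu}$ by $\Theta(P_\mu)$ cancels the purely equivariant theta functions coming from the polarization and leaves each building block in the shape $\vartheta(a^{l}z\hbar^{?})/\vartheta(a^{l}\hbar^{?})$, that is, a ratio in which the exponent $a^{l}$ of the equivariant parameters is identical in numerator and denominator. Collecting the $n$ factors exhibits $s(a,z)$ as a single product (a one-term sum) of such ratios, which is precisely the form demanded by the definition of a balanced section. Hence $s$ is balanced in $a$, and by Lemma~\ref{wlim} this already secures the good $q\to0$ behavior that motivates the whole construction.

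For part 2) I would invoke the symmetry between equivariant and K\"ahler parameters carried by the hypertoric elliptic stable envelope: under $3d$-mirror symmetry (Gale duality of the arrangement) the roles of $a$ and $z$ are interchanged, and the same building blocks, read in the dual arrangement, display matching K\"ahler exponents in numerator and denominator of each theta-ratio. Alternatively, and more safely for a self-contained argument, one reads the $z$-balancing directly off the explicit formula, since each dynamical factor depends on a K\"ahler parameter only through $\vartheta(a^{l}z\hbar^{?})$ paired with a $z$-independent theta, or through a genuine ratio with equal $z$-exponent; either route yields that $s$ is balanced in $z$.

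Finally, part 3). The poles of $s$ are the zero divisors of the theta functions standing in the denominators of the building blocks. The key hypertoric feature to verify is that every such denominator is the theta of a monomial that is \emph{either} purely equivariant (in $a$ and $\hbar$) \emph{or} purely K\"ahler (in $z$), never a mixed monomial $a^{l}z^{m}$ with both $l$ and $m$ nonzero: the equivariant denominators originate from the tangent/Euler-class normalization and carry no K\"ahler dependence, while the K\"ahler poles originate from the dynamical normalization and carry no equivariant dependence. Consequently the polar divisors split into one family cut out by monomial equations in $a$ alone and another cut out by monomial equations in $z$ alone, which is the assertion. The main obstacle is exactly this last bookkeeping step: arranging the explicit formula so that the mixed monomials $a^{l}z^{m}\hbar^{?}$ appear only in numerators relies on the combinatorics specific to hyperplane arrangements, and it is here that a general symplectic $X$ would fail either to be balanced or to have poles that separate in $a$ and $z$.
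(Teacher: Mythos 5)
Your proposal is correct and follows essentially the same route as the paper: the paper's proof likewise reduces everything to the explicit product-of-theta-functions formula for hypertoric elliptic stable envelopes (Section 4.1.3 of \cite{AOElliptic}, Section 3.2 of \cite{SZ}) and observes that balancing in $a$, balancing in $z$, and separation of poles are manifest from that formula. You simply carry out the inspection in more detail (and your mirror-symmetry remark for part 2 is an optional detour the paper does not take), so there is no substantive difference in method.
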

The property 3) means that for $s(a,z)$ it is allowed to have factors $\vartheta(a)\vartheta(z)$
but not $\vartheta(az)$ in denominators of (\ref{balform}).
\begin{proof}
	For the hypertoric varieties, the formulas for the elliptic stable envelopes of fixed points can be described very explicitly as certain products of theta functions, see Section 4.1.3 of \cite{AOElliptic} or Section 3.2 in \cite{SZ}.  These hypertoric formulas are explicitly balanced separately in equivariant and K\"ahler parameters, and have separated poles.
\end{proof}

\begin{Corollary} \label{balcor}
	{\it If $X$ is a quiver variety with finite $X^{\bT}$ then (\ref{balsec}) has properties 1), 2), 3).}  	
\end{Corollary}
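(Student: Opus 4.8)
The plan is to reduce the statement to the hypertoric case already settled by the previous Proposition, using the abelianization of Nakajima quiver varieties. Recall that such an $X$ is a symplectic reduction $X = T^*\mathrm{Rep}(V,W)/\!\!/\!\!/ G$ of a symplectic vector space by the gauge group $G=\prod_i GL(V_i)$, and that its \emph{abelianization} $X' = T^*\mathrm{Rep}(V,W)/\!\!/\!\!/ T_G$ by a maximal torus $T_G\subset G$ is a hypertoric variety. I would work equivariantly with respect to the enlarged torus $\bA\times T_G$, whose coordinates are the framing parameters together with the gauge Chern roots $x=(x_i)$. For $X'$ the previous Proposition applies verbatim: the restriction of $\Stab^{Ell}_{X',\fC,P}$ to a fixed point, normalized as in (\ref{balsec}), is balanced in the joint equivariant parameters $(a,x)$, balanced in $z$, and has separated poles.

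Next I would invoke the abelianization formula for elliptic stable envelopes, which expresses $\Stab^{Ell}_X$ as a product of $\Stab^{Ell}_{X'}$ with a kernel built from the off-diagonal roots $x_i/x_j$ of $G$ and the parameter $\hbar$, followed by a symmetrization over the Weyl group of $T_G$. The essential structural point is that this correction kernel is a ratio of theta functions whose arguments are monomials in the gauge roots $x$ and in $\hbar$ \emph{only}, carrying no dependence on the K\"ahler parameters $z$. Restriction of the result to a torus fixed point $\mu\in X^{\bT}$ then amounts to specializing each gauge root $x_i$ to a definite monomial in $a$ and $\hbar$ prescribed by the combinatorics of $\mu$ (for the Hilbert scheme, the contents of the boxes of the corresponding partition).

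Finally I would trace the three properties through these operations. A section balanced in the equivariant parameters $(a,x)$ remains balanced in the equivariant parameters $(a,\hbar)$ once each $x_i$ is replaced by a monomial in $a$ and $\hbar$; the root kernel, being balanced in $x$ and $z$-free, likewise stays balanced after the same specialization and never introduces the K\"ahler parameters; and the symmetrization, being a finite sum, preserves the balanced form (\ref{balform}). Separated poles are inherited for the same reason, since $z$ never appears inside a theta function together with a gauge root, so specializing $x\mapsto$ monomials in $a,\hbar$ can only create denominators of the form $\vartheta(\text{monomial in }a,\hbar)$, never a forbidden $\vartheta(az)$. The main obstacle is the bookkeeping around the symmetrization: the kernel carries denominators $\vartheta(x_i/x_j)$ that must either cancel in the Weyl average or stay regular under the specialization $x\mapsto$ monomials, and one has to confirm that no spurious pole mixing $a$ and $z$ is produced at this step. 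A cleaner alternative, at least for type $A$ quivers, is to read the three properties directly off the explicit elliptic weight-function presentation of $T_{\lambda,\mu}$, where the separated $a$- and $z$-dependence is manifest.
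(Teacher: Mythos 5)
Your proposal takes essentially the same route as the paper: the paper's proof of Corollary \ref{balcor} is precisely the reduction to the hypertoric Proposition via the abelianization of $X$, with all details delegated to Section 4 (in particular Section 4.3) of \cite{AOElliptic}. Your expanded account --- tracing the balanced form and the separated poles through the root kernel, the specialization of the gauge Chern roots to monomials in $a$ and $\hbar$, and the Weyl symmetrization --- is a faithful unpacking of that same citation, so the two arguments coincide in substance.
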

\begin{proof}
 For quiver varieties,  the elliptic stable envelope of a fixed point $\lambda \in X^{\bT}$ can be expressed in terms of the elliptic stable envelopes of fixed points in the hypertoric variety given by the  {\it abelianization} of $X$. We refer to Section 4 of \cite{AOElliptic} (in particular Section 4.3) where the details of the abelianization procedure are explained.  
\end{proof}

We expect that these properties of the elliptic stable envelope hold in general.

\begin{Conjecture} \label{conj1}
	{\it Let $X$ be a smooth symplectic variety with finite $X^{\bT}$ for which the elliptic stable envelope exists. Then 1),2),3) hold for (\ref{balsec}). }
\end{Conjecture}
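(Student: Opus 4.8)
The plan is to reduce the general case to the two settings already handled, namely the hypertoric case treated in the Proposition above and the quiver-variety case of Corollary \ref{balcor}, by isolating the purely \emph{axiomatic} content that forces properties 1)--3). The abelianization argument used for quiver varieties is not available for an arbitrary symplectic resolution, so the first task is to replace it by a direct argument that extracts balancedness and separation of poles from the defining properties of $\Stab^{Ell}_{X,\fC,P}(\lambda)$ alone: holomorphicity in $a$, the prescribed quasi-periods in the equivariant and K\"ahler directions, the diagonal normalization, and the triangularity $T_{\lambda,\mu}=0$ unless $\mu$ precedes $\lambda$ in the order determined by $\fC$.

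The concrete route I would take is an induction over the poset of fixed points. For the base case one has the diagonal restriction $T_{\lambda,\lambda}(a,z)$, which is, up to the normalization $\Theta(P_\lambda)$, a product of theta functions of the characters of the (polarized) tangent space at $\lambda$; being a product of factors $\vartheta(a^l\hbar^k)$ of monomials, the ratio (\ref{balsec}) is manifestly balanced in $a$ with poles only along $\vartheta(a)=0$, and the same holds in $z$ trivially. For the inductive step one fixes $\mu<\lambda$ and regards $T_{\lambda,\mu}(a,z)$ as the unique section over $\widehat{\Or}_{\mu}$ that is holomorphic in $a$, carries the line bundle prescribed by the gluing data $\Delta$, and restricts correctly along the boundary of $\widehat{\Or}_{\mu}$ to the already-constructed lower pieces. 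I would then write a \emph{balanced ansatz} in the form (\ref{balform}), with separated poles as in property 3), whose quasi-periods in $a$ and in $z$ (computed from (\ref{thetatransf})) match those dictated by $\Delta$ and $\Pic(X)$, and show that the holomorphicity and matching conditions pin this ansatz down uniquely; by uniqueness of the elliptic stable envelope this balanced section must then coincide with $T_{\lambda,\mu}$.

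The hard part will be the holomorphicity step inside this induction. Imposing holomorphicity in $a$ can, a priori, force cancellations that reintroduce a factor $\vartheta(az)$ into a denominator, mixing the $a$- and $z$-poles and thereby destroying both the balancedness in a single variable and the separation of poles of property 3); ruling this out is exactly what the explicit product formulas do for hypertoric $X$ and what abelianization transports to quiver varieties, and it is precisely the phenomenon one must control intrinsically in general. I would try to do so by analysing the divisor of $T_{\lambda,\mu}$ as a section of a degree-zero bundle on $\widehat{\Or}_{\mu}\cong \cB_{\bT,X}$ and showing that its zeros and poles in the $a$-directions organise into the paired families of (\ref{balform}). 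A complementary, and perhaps more conceptual, line of attack is to invoke the expected symplectic duality of elliptic stable envelopes: under the interchange of $X$ with its 3d-mirror the roles of $a$ and $z$ are swapped, so that balancedness in $z$ for $X$ would follow from balancedness in $a$ for the dual, while separation of poles would be self-dual. This, however, presently rests on conjectural input itself, which is why a construction of elliptic stable envelopes that is \emph{manifestly} balanced for arbitrary $X$ --- the real missing ingredient --- would be the most decisive way to settle the statement.
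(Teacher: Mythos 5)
You should first note that the statement you are trying to prove is stated in the paper as a \emph{Conjecture}: the authors prove properties 1)--3) only for hypertoric varieties (by the explicit product formulas of \cite{AOElliptic,SZ}) and for Nakajima quiver varieties with finite $X^{\bT}$ (Corollary \ref{balcor}, via abelianization), and explicitly leave the general case open. So there is no proof in the paper to compare against, and the relevant question is whether your argument actually closes the gap the authors left. It does not, and the failure point is the one you yourself flag. Your inductive step posits a \emph{balanced ansatz} of the form (\ref{balform}) with separated poles, shows (or hopes to show) it satisfies the quasi-periods, holomorphicity and matching conditions, and then invokes uniqueness of the elliptic stable envelope to conclude it equals $T_{\lambda,\mu}$. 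But uniqueness only identifies a candidate \emph{after existence of such a candidate is established}; the existence of a section that is simultaneously (i) compatible with the gluing data $\Delta$ and the off-diagonal restriction conditions and (ii) expressible in the balanced, pole-separated form, is precisely the content of the conjecture. Nothing in the axioms of the stable envelope forces the divisor of $T_{\lambda,\mu}$ in the $a$-directions to organize into the paired families $\vartheta(a^l\cdots)/\vartheta(a^l\cdots)$: a section of a degree-zero bundle over the abelian variety $\widehat{\Or}_{\mu}$ can perfectly well have denominator factors of the shape $\vartheta(az)$, which is exactly the phenomenon that property 3) excludes and that the explicit hypertoric formulas (and their abelianization pushforwards) rule out by inspection rather than by any axiomatic principle. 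So your argument is circular at its central step.

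Your second line of attack, via 3d-mirror symmetry (swapping the roles of $a$ and $z$ under symplectic duality), has the same status: it would at best reduce balancedness in $z$ for $X$ to balancedness in $a$ for the dual variety, but the existence of the dual and of the duality interface for elliptic stable envelopes is itself conjectural for general $X$, as you concede. In short, your proposal is a reasonable research plan --- it correctly isolates where the difficulty sits (controlling the $a$-divisor of $T_{\lambda,\mu}$ intrinsically, without explicit formulas or abelianization) --- but it is not a proof, and it proves strictly less than the paper already establishes, since the paper's two proved cases (hypertoric and quiver) both rest on ingredients your axiomatic induction deliberately avoids and cannot yet replace.
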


It has been shown that the elliptic stable envelopes exists for quite general examples of $X$, we refer to
\cite{okounkov2021inductive,okounkov2021nonabelian} for discussion.

\subsection{} 
From the proof of Lemma \ref{wlim} it is clear that for generic $\textsf{w}$ the limit (\ref{exlims}) does not depend on  variables $a$, i.e, is an element of $\matC(z)$. The points for which this is not true play crucial role.

\begin{Definition} \label{resdef}
{\it Let $s(a,z)$ be a section balanced in variables $a$.
The point $\textsf{w}\in \matR^{n}$ is called a \underline{resonance} of $s(a,z)$ if the limit (\ref{exlims}) is a non-trivial function of $a$: 
$$
\sqrt{z}\lim\limits_{q\to 0} s(a q^{\wall}
		,z) \not\in \matC(z)
$$
We say that $\textsf{w}$ is a resonance of a collection of balanced sections $\{s_i(a,z)\}_{i\in I}$ if
it is a resonance for at least one of them.
}
\label{resdef}
\end{Definition}

We will denote by $\textsf{Res}(\{s_i(a,z)\}_{i\in I})\subset \matR^n$ the set of resonances of a collection $\{s_i(a,z)\}_{i\in I}$.

Assume we are given a finite set of $a$-{\rrr balanced} sections $\{s_i(a,z)\}_{i\in I}$. Consider the set of weights
$$
L=\{l=(l_1,\dots,l_n) \in \matZ^{n}: \textrm{appearing in (\ref{balform}) for all}  \ \ s_{i}(a,z), i\in I\}
$$
Let $\Lambda^{*} \subset \matR^{n}$ be the lattice generated by $L$, and let $\Lambda$ be the dual lattice. We can assume that $\Lambda \subset \matR^{n}$ by identifying $\matR^{n}$ with its dual.

\begin{Proposition} \label{prores}
{\it The set $\mathsf{Res}(\{s_i(a,z)\}_{i\in I})$ is a $\Lambda$-periodic  arrangement of hyperplanes in $\matR^{n}$.}
\end{Proposition}
\begin{proof}
It is clear from the explicit form (\ref{balform}) and limit (\ref{exlims}) that $\mathsf{w}$ is a resonance only if
$\langle l, \mathsf{w} \rangle =m$
for some integral $m$.
This is a $\Lambda$-periodic arrangement of hyperplanes.
\end{proof}

\begin{Note} \label{notelam}
$\mathsf{Res}(\{s_i(a,z)\}_{i\in I})$ is a subarrangement of the hyperplane arrangement given  by
$$
\langle l, \mathsf{w} \rangle =m
$$
for $l\in L$, $m\in \matZ$ (but does not necessarily {\rrr coincide} with it). 
\end{Note}

\subsection{}
The K-theoretic stable envelope (we refer to Section 9 of \cite{pcmilect} for its definition) can be obtained from the elliptic as the following limit:
\begin{Proposition}[Proposition 4.3 in \cite{AOElliptic}]
{\it For generic  $s\in \Pic(X)\otimes \matR$ we have:
\be \label{kthlim}
\det(P)^{-1/2}\otimes \lim\limits_{q\to 0} \left(\left.\Stab^{Ell}_{X,\fC,P}(\lambda)\right|_{z=q^s}\right) \otimes \det(P_{\lambda,0})^{1/2}  = \Stab^{Kth,[s]}_{X,\fC,P}(\lambda)
\ee
where $\Stab^{Kth,[s]}_{X,\fC,P}(\lambda)$ is the K-theoretic stable envelope of $\lambda$ with slope  $s$. $P_\lambda$ denotes the restriction of $P$ to a fixed point $\lambda$ and $P_{\lambda,0}$ is the component of $P_{\lambda}$ which has zero degree in $a${\rrr,} see (\ref{polparts}).} 
\end{Proposition}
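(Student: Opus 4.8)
The plan is to argue by the standard axiomatic characterization shared by the two stable envelopes: each is the unique class pinned down by a diagonal normalization, a support (triangularity) condition, and a degree condition, so it suffices to verify that the left-hand side of (\ref{kthlim}) satisfies the three axioms defining $\Stab^{Kth,[s]}_{X,\fC,P}(\lambda)$. First I would record the characterization of the K-theoretic stable envelope of slope $s$ from Section 9 of \cite{pcmilect}: its restriction to the diagonal fixed point equals $\ss$ of the repelling normal directions (with the chosen polarization $P$), its support lies on the full attracting set, and its off-diagonal restrictions have Newton polytope in the equivariant variables $a$ confined to the window fixed by the slope $[s]\in\Pic(X)\otimes\matR$.

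The first substantive step is to show the limit exists and is finite. Working one component $\mu$ at a time, the normalized restriction (\ref{balsec}) is balanced in the K\"ahler variables $z$ by Corollary \ref{balcor}, so setting $z=q^{s}$ is exactly the substitution controlled by the analogue of Lemma \ref{wlim} with the roles of $a$ and $z$ interchanged. Concretely, each theta in a balanced monomial of (\ref{balsec}) has as argument a monomial in $a,\hbar,z$; substituting $z=q^{s}$ and iterating the quasiperiodicity (\ref{thetatransf}) strips off the integral part of the resulting exponent of $q$, after which (\ref{prodform}) yields a finite $q\to 0$ value exactly as in (\ref{thetlim}). The residual half-integer powers of $q$ assemble, by balancedness, into one overall monomial --- the counterpart here of the $\sqrt{z}$ factor of Lemma \ref{wlim} --- which together with the $q\to 0$ degeneration $\Theta\to\ss$ is precisely what the determinant twists $\det(P)^{-1/2}$ and $\det(P_{\lambda,0})^{1/2}$ are designed to absorb, reconciling the polarization-dependent square-root normalizations of the two theories. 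Genericity of $s$ ensures that each exponent $s\cdot m$ is non-integral, so only the first branch of (\ref{thetlim}) is used and the limit is unambiguous.

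It remains to check the three K-theoretic axioms for the class so produced, following the strategy of \cite{AOElliptic}. The diagonal is immediate: $T_{\lambda,\lambda}$ is, up to the polarization, the elliptic Euler class of the repelling part $N^{-}_\lambda$ of the tangent space cut out by $\fC$, and its $q\to 0$ limit after the determinant twists is $\ss(N^{-}_\lambda)$, the required K-theoretic normalization. The support (triangularity) condition descends from the elliptic side: holomorphicity of the $T_{\lambda,\mu}$ in $a$ together with the gluing data $\Delta$ forces the limit to vanish for $\mu$ off the attracting set. The main obstacle is the degree, or window, condition: I must show that the Newton polytope in $a$ of the surviving limit lies inside the slope-$s$ window and is strictly dominated by the diagonal term at $\mu$. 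This is exactly where the direction $s$ acts --- the floor functions $\lfloor s\cdot m\rfloor$ produced by (\ref{thetlim}) weight the competing theta-monomials and select precisely the $a$-degrees permitted by the slope $[s]$. Matching this combinatorial selection against the definition of the window is the delicate computation; once it is in place, uniqueness of the K-theoretic stable envelope identifies the limit with $\Stab^{Kth,[s]}_{X,\fC,P}(\lambda)$.
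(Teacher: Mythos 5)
First, a point of comparison: the paper does not prove this statement at all --- it is imported verbatim as Proposition 4.3 of \cite{AOElliptic}, so there is no internal proof to measure your attempt against. The closest internal analogue is the proof of Theorem \ref{manth}, which generalizes this proposition to $\mc$-fixed loci and follows exactly the skeleton you propose: existence of the limit via balancedness (Lemma \ref{wlim}, Corollary \ref{balcor}), then verification of the support condition, the diagonal normalization, and the degree bound, then uniqueness of the K-theoretic stable envelope (Proposition 9.2.2 in \cite{pcmilect}). So your overall strategy is the standard and correct one, consistent both with \cite{AOElliptic} and with how this paper argues.

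As a proof, however, your proposal has a genuine gap, and it sits exactly where you yourself flag it: the window (degree) condition. You state that matching the floor-function combinatorics of (\ref{thetlim}) against the slope-$[s]$ window is ``the delicate computation'' and then invoke uniqueness conditional on it. But this is the only one of the three axioms in which the slope $s$ and its genericity enter at all; the other two checks are essentially formal. (Indeed, the support condition descends immediately: $\Stab^{Ell}_{X,\fC,P}(\lambda)$ is by definition supported on the full attracting set, so its restrictions to $\mu$ off that set vanish identically before any limit is taken --- no appeal to holomorphicity or to the gluing data $\Delta$ is needed.) Deferring the window verification therefore defers the entire content of the proposition: without it uniqueness cannot be invoked, and what you have is a correct plan rather than a proof. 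Two smaller imprecisions are worth fixing as well. The residual square-root prefactor produced by the $z\leftrightarrow a$ analogue of Lemma \ref{wlim} is a half-integer monomial in the equivariant variables $a$ and $\hbar$, not ``half-integer powers of $q$'' --- any leftover power of $q$ would force the limit to be $0$ or $\infty$; it is precisely this $a$-monomial that the twists $\det(P)^{-1/2}$ and $\det(P_{\lambda,0})^{1/2}$ absorb. And the diagonal limit after these twists is not bare $\ss(N^{-}_{\lambda})$ but carries the factor $\hbar^{\mathrm{rk}(\ind_{\lambda})/2}/\det(\ind_{\lambda})$ as recorded in (\ref{ksdia}); this bookkeeping is exactly what reconciles the elliptic normalization with the conventions of \cite{pcmilect}, so it cannot be waved away when matching against the K-theoretic axioms.
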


The $K$-theoretic stable envelopes for the slopes which are close to $0\in \Pic(X) \otimes \matR$ play a special role in representation theory, see Theorem~10.2.11 in~\cite{pcmilect} for an example. If $\mathscr{U}_{0} \subset \Pic(X) \otimes \matR$ is a small analytic neighborhood of $0$ then the K-theoretic stable envelope changes only when the slope crosses certain hyperplanes passing through $0\in \Pic(X) \otimes \matR$. These hyperplanes divide the neighborhood into a set of  chambers:
\be \label{dcham}
\mathscr{U}_{0} \setminus \{ \textrm{hyperplanes} \} = \coprod_i \mathfrak{D}_i.
\ee 
We will denote K-theoretic stable envelopes with the slope from these chambers by:
$$
\Stab^{\mathfrak{D}}_{X,\fC,P}(\lambda):=\Stab^{Kth,[s]}_{X,\fC,P}(\lambda), \ \ \ s\in \mathfrak{D}.
$$
If we denote
\be \label{limitdef}
\lim\limits_{z\to 0_{\mathfrak{D}}} f(z):= \lim\limits_{q\to 0} f(q^{s}), \ \ s\in \mathfrak{D}, 
\ee
then for small slopes  (i.e., from $\mathscr{U}_{0}$) the above proposition gives:
\begin{Proposition} \label{kthcorol}
{\it Let us Denote by 
$$ \label{zlim}
S(\lambda):= \det(P)^{-1/2}\otimes \left( \lim\limits_{q\to 0} \Stab^{Ell}_{X,\fC,P}(\lambda)\right)  \otimes \det(P_{\lambda,0})^{1/2} \in K_{\bT}(X)\otimes \matC(z)
$$	
then
\be \label{zlim2}
\lim\limits_{z\to 0_{\mathfrak{D}}} S(\lambda) = \Stab^{\mathfrak{D}}_{X,\fC,P}(\lambda)
\ee	 } 
\end{Proposition}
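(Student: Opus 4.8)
The plan is to deduce Proposition~\ref{kthcorol} from the preceding Proposition (Proposition 4.3 of \cite{AOElliptic}) by showing that the \emph{iterated} limit on the left of (\ref{zlim2}) coincides with the \emph{simultaneous} limit $z=q^{s}$, $q\to 0$ of (\ref{kthlim}). Since both sides of (\ref{zlim2}) are sections over the fixed locus, I would argue component by component: writing $S(\lambda)|_{\mu}$ for the restriction to $\widehat{\Or}_{\mu}$, it is enough to check that $\lim_{z\to 0_{\mathfrak{D}}}S(\lambda)|_{\mu}=\Stab^{Kth,[s]}_{X,\fC,P}(\lambda)|_{\mu}$ for every $\mu\in X^{\bT}$ and every $s\in\mathfrak{D}$. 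Here $S(\lambda)|_{\mu}$ is, by hypothesis, the $q\to0$ limit of the normalized restriction $T_{\lambda,\mu}(a,z)/\Theta(P_{\mu})$ of (\ref{balsec}); replacing each $\vartheta$ by its degeneration $\ss$ exhibits it as a ratio of $\ss$-factors, and $\lim_{z\to 0_{\mathfrak{D}}}S(\lambda)|_{\mu}=\lim_{q\to0}\big(S(\lambda)|_{\mu}\big)\big|_{z=q^{s}}$ then extracts, factor by factor, the leading monomial of each $\ss(z^{m}\cdots)$ in the direction $s$.

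The main point is to compare this with the simultaneous limit. Expanding each theta factor of $T_{\lambda,\mu}$ by the product formula (\ref{prodform}), a factor $\vartheta(z^{m}\,a^{k}\hbar^{\cdots})$ becomes $\vartheta(q^{\langle s,m\rangle}a^{k}\hbar^{\cdots})$ after the substitution $z=q^{s}$, where $\langle s,m\rangle$ is the pairing of the slope with the $z$-monomial. The key observation is that for a \emph{small} slope $s\in\mathscr{U}_{0}$ one has $|\langle s,m\rangle|<1$ for all monomials $m$ occurring in the stable envelope, so every higher product factor $(1-q^{\,i+\langle s,m\rangle}(\cdots))(1-q^{\,i-\langle s,m\rangle}(\cdots))$ tends to $1$ as $q\to0$ and only the leading term $(x^{1/2}-x^{-1/2})=\ss(x)$ survives, in accordance with identity (\ref{thetlim}). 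Consequently the simultaneous limit first replaces every $\vartheta$ by $\ss$ (which is exactly the $q\to0$ limit producing $S(\lambda)$) and then extracts the leading monomial in the direction $s$ (which is exactly $\lim_{z\to 0_{\mathfrak{D}}}$); that is, the two orders of limits agree. The preceding Proposition then identifies the common value with $\Stab^{Kth,[s]}_{X,\fC,P}(\lambda)=\Stab^{\mathfrak{D}}_{X,\fC,P}(\lambda)$, giving (\ref{zlim2}); independence of the choice of $s\in\mathfrak{D}$ follows because the leading-monomial extraction depends only on the signs of the pairings $\langle s,m\rangle$, which are constant on each chamber $\mathfrak{D}_{i}$ of (\ref{dcham}).

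The step I expect to be the main obstacle is the bookkeeping of the monomial prefactors: the $z^{-\langle s,m\rangle-1/2}$-type factors produced by (\ref{thetlim}) together with the normalizing twists $\det(P)^{-1/2}$ and $\det(P_{\lambda,0})^{1/2}$ must be shown to cancel consistently between the iterated and the simultaneous computation. This is precisely where balancedness in the K\"ahler parameters (Corollary~\ref{balcor}, property~2) and separation of poles (property~3) enter: they guarantee that the $z$-monomials in numerator and denominator match and assemble into a genuine degree-zero rational function, so that the two extractions produce the same prefactor. One must also make the smallness of $\mathscr{U}_{0}$ quantitative, i.e.\ choose it so that $|\langle s,m\rangle|<1$ uniformly over the finitely many monomials $m$ appearing in $T_{\lambda,\mu}$; beyond the quiver and hypertoric cases, where these balanced forms are available, this argument is conditional on the Conjecture above.
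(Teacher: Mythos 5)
Your proposal is correct and follows the same route the paper takes, with the caveat that the paper in fact offers no proof at all: Proposition \ref{kthcorol} is stated there as an immediate consequence of the preceding proposition (Proposition 4.3 of \cite{AOElliptic}), the definition $\lim_{z\to 0_{\mathfrak{D}}}f(z):=\lim_{q\to 0}f(q^{s})$, and the constancy of $\Stab^{Kth,[s]}_{X,\fC,P}(\lambda)$ for $s$ ranging over a chamber $\mathfrak{D}$. What you supply --- and what the paper silently assumes --- is precisely the interchange of the iterated limit (first $q\to 0$ at fixed $z$, then $z=q^{s}$, $q\to 0$) with the simultaneous limit $z=q^{s}$, $q\to 0$ appearing in (\ref{kthlim}). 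Your justification of the interchange is the right one, and it runs on the paper's own mechanism: for a small slope every pairing satisfies $|\langle s,m\rangle|<1$, so in each factor $\vartheta(z^{m}a^{k}\cdots)$ evaluated at $z=q^{s}$ the tail factors $(1-q^{\,i+\langle s,m\rangle}\cdots)(1-q^{\,i-\langle s,m\rangle}\cdots)$, $i\geq 1$, tend to $1$ and only the prefactor $(x^{1/2}-x^{-1/2})=\ss(x)$ survives; hence extracting the leading monomial after degenerating $\vartheta\to\ss$ gives the same answer as the simultaneous limit, in agreement with (\ref{thetlim}). Note that this is genuinely where smallness of $\mathscr{U}_{0}$ enters: for $|\langle s,m\rangle|>1$ the floor function in (\ref{thetlim}) changes the extracted power, and the two orders of limits really do differ, so your quantitative choice of $\mathscr{U}_{0}$ over the finitely many occurring monomials is the crux rather than a technicality. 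Two minor remarks: (i) $S(\lambda)|_{\mu}$ is not literally the $q\to 0$ limit of (\ref{balsec}) --- it equals $\det(P_{\mu})^{-1/2}\otimes\lim_{q\to 0}T_{\lambda,\mu}\otimes\det(P_{\lambda,0})^{1/2}$ --- but the discrepancy $\Theta(P_{\mu})\otimes\det(P_{\mu})^{-1/2}$ involves no K\"ahler variables, so it passes through both limits unchanged, and your bookkeeping paragraph effectively repairs this; (ii) as you note, your argument needs the balanced form, so it is unconditional only where Corollary \ref{balcor} or the hypertoric Proposition applies and is conditional on the Conjecture otherwise --- this matches, rather than weakens, what the paper's one-line deduction implicitly requires.
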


\subsection{}
From the definition of the elliptic stable envelope we know that the section~(\ref{balsec}) has the following quasiperiods: 
\be \label{quasper}
s(a q^{\sigma},z)=z^{\chi_{\lambda}(\sigma,\cdot)-\chi_{\mu}(\sigma,\cdot)}  s(a,z) \ \ \ s(a ,zq^{\delta})=  a^{\chi_{\lambda}(\cdot,\delta)-\chi_{\mu}(\cdot,\delta)}  s(a,z) 
\ee
where  $\sigma \in \textrm{cochar}(\bA)$  and  $\delta \in \textrm{cochar}(\bK)$, and $\chi_{\lambda}$ is the pairing defined in Section 2.1.7 of \cite{OS}:
$$
\chi_{\lambda}: \textrm{cochar}(\bA)\times \textrm{cochar}(\bK) \to \matZ.
$$
(here we denote the K\"ahler torus of $X$ by $\bK=\Pic(X)\otimes_{\matZ}\matC^{\times}$).
By Lemma~\ref{wlim} this section has a well defined limit when $q\to 0$, moreover:
\begin{Lemma} \label{limexlem}
	{\it 
	If $\wall \in \mathrm{cochar}(\bA)\otimes \matR$ and $\textsf{v}\in \mathrm{cochar}(\bK) \otimes \matR$ then	the limits
		$$
		\lim_{z\to0_{\mathfrak{D}}}\Big(z^{-\chi_{\lambda}(\wall,\cdot)+\chi_{\mu}(\wall,\cdot)} \lim\limits_{q\to 0} s(a q^{\wall},z)\Big) 
		\ \ \ 
		\text{and} \ \ \ 
		\lim_{a\to0_{\mathfrak{C}}}\Big(a^{-\chi_{\lambda}(\cdot,\textsf{v})+\chi_{\mu}(\cdot,\textsf{v})} \lim\limits_{q\to 0} s(a ,z q^{\textsf{v}})\Big)
		$$
		exist for all chambers $\mathfrak{C}$ and $\mathfrak{D}$.} 
\end{Lemma}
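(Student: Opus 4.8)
The plan is to secure the two inner $q\to 0$ limits first by balancedness, and then to control the residual degeneration in the K\"ahler (resp. equivariant) parameters by matching the exponent produced by the quasiperiods against the floor-exponents in (\ref{thetlim}). Since (\ref{balsec}) is balanced in $a$ (Corollary \ref{balcor}), Lemma \ref{wlim} applies and the inner limit
\[
F_{\wall}(a,z):=\lim_{q\to 0} s(a q^{\wall},z)
\]
exists, with $\sqrt{z}\,F_{\wall}\in\matC(a,z)$. The first genuine step is to reduce to a bounded range of $\wall$. Writing $\wall=\sigma+\wall'$ with $\sigma\in\mathrm{cochar}(\bA)$ and $\wall'$ in a fixed bounded fundamental cell, and applying the $a$-quasiperiod $s(aq^{\sigma},z)=z^{\chi_{\lambda}(\sigma,\cdot)-\chi_{\mu}(\sigma,\cdot)}s(a,z)$ to the argument $aq^{\wall'}$, one pulls the integral shift through the limit:
\[
F_{\wall}(a,z)=z^{\chi_{\lambda}(\sigma,\cdot)-\chi_{\mu}(\sigma,\cdot)}\,F_{\wall'}(a,z).
\]
Consequently the normalized quantity $h_{\wall}:=z^{-\chi_{\lambda}(\wall,\cdot)+\chi_{\mu}(\wall,\cdot)}F_{\wall}$ depends only on the fractional part of $\wall$, so it suffices to prove existence of $\lim_{z\to 0_{\mathfrak{D}}}h_{\wall}$ for $\wall$ in a single bounded cell.

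On such a cell I would evaluate $F_{\wall}$ explicitly by applying (\ref{thetlim}) to every theta-ratio in the balanced presentation (\ref{balform}): each factor converges either to a pure power of a $z$-monomial (the non-integral branch) or to such a power times a factor $\tfrac{1-\cdots}{1-\cdots}$ (the integral branch), so that $h_{\wall}$ is a rational function of $z$ times a half-integral $z$-monomial. The essential point, and the main obstacle, is to show that this rational function admits a finite limit along every ray $z=q^{s}$, $s\in\mathfrak{D}\subset\mathscr{U}_0$, that is, for \emph{every} chamber $\mathfrak{D}$ simultaneously with the \emph{single}, chamber-independent normalization. This is exactly where the choice of normalization enters: the exponent $\chi_{\lambda}(\wall,\cdot)-\chi_{\mu}(\wall,\cdot)$ supplied by the $a$-quasiperiod must coincide with the leading $z$-exponent extracted by the floor function in (\ref{thetlim}), with all remaining contributions subleading in the $\mathfrak{D}$-direction. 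I expect to establish this cell by cell, matching $\chi_{\lambda}(\wall,\cdot)-\chi_{\mu}(\wall,\cdot)$ against the combination of the $-\lfloor(\cdot,\wall)\rfloor-\tfrac12$ exponents and invoking the support and degree (slope) conditions of the stable envelope, in the same spirit as Proposition \ref{kthcorol}, to guarantee finiteness uniformly over all chambers.

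Finally, the second limit is handled by the symmetric argument, with the roles of $a$ and $z$, and of $\mathfrak{C}$ and $\mathfrak{D}$, interchanged: balancedness in the K\"ahler parameters (property 2), available for quiver varieties by Corollary \ref{balcor}) makes Lemma \ref{wlim} applicable to $s(a,z q^{\textsf{v}})$, the $z$-quasiperiod $s(a,zq^{\delta})=a^{\chi_{\lambda}(\cdot,\delta)-\chi_{\mu}(\cdot,\delta)}s(a,z)$ (for $\delta\in\mathrm{cochar}(\bK)$) replaces the $a$-quasiperiod in the periodicity reduction, and the same exponent-matching yields existence of $\lim_{a\to 0_{\mathfrak{C}}}$ for every equivariant chamber $\mathfrak{C}$.
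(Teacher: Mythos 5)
Your overall plan is sound and matches the paper's strategy in outline: secure the inner $q\to 0$ limit via balancedness (Corollary \ref{balcor} plus Lemma \ref{wlim}), then show the quasiperiod normalization $z^{-\chi_{\lambda}(\wall,\cdot)+\chi_{\mu}(\wall,\cdot)}$ exactly compensates the $z$-powers produced by the floor exponents in (\ref{thetlim}). Your periodicity reduction to a fundamental cell of $\wall$ is valid (and not needed in the paper, since the floor function absorbs integral shifts automatically). However, the proof has a genuine gap precisely at its crux: the exponent-matching step is announced (``I expect to establish this cell by cell'') but never carried out, and the tools you propose for it --- the support and degree (slope) conditions of the stable envelope, ``in the spirit of Proposition \ref{kthcorol}'' --- are the wrong ones. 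Those conditions bound degrees in the \emph{equivariant} parameters $a$ and say nothing about the $z\to 0_{\mathfrak{D}}$ asymptotics you need; moreover the lemma is a statement about balanced sections with prescribed quasiperiods and requires no stable-envelope input beyond balancedness itself.

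What actually closes the gap in the paper is a short, concrete computation that your write-up never reaches. First reduce to one-dimensional $\bA$ and $\bK$ (so that ``all chambers'' means exactly the two limits $z\to 0$ and $z\to\infty$). Then use the separated-poles form of property 3), i.e.\ write
\begin{equation*}
s(a,z)=\sum f(a)\,g(z)\prod_i \dfrac{\vartheta(a^{n_i}z^{m_i})}{\vartheta(a^{n_i})\,\vartheta(z^{m_i})},
\end{equation*}
not merely the generic balanced form (\ref{balform}). The transformation law (\ref{thetatransf}) then gives $\chi_{\lambda}-\chi_{\mu}=-\sum_i n_i m_i$, and --- crucially --- this value is the same for every term of the sum, since all terms are sections of one line bundle; this is what makes a single normalization work across the whole sum. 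Finally, after applying (\ref{thetlim}) factor by factor, the normalized expression carries the factors $z^{m_i(\wall n_i-\lfloor \wall n_i\rfloor)}/(z^{m_i}-1)$, and the elementary inequality $0\le \wall n_i-\lfloor \wall n_i\rfloor<1$ shows each such factor stays finite in \emph{both} limits $z\to 0$ and $z\to\infty$, which is exactly the uniformity over all chambers $\mathfrak{D}$ that you flagged as the main obstacle. Without this per-factor bookkeeping (or an equivalent substitute), your argument does not establish the claimed finiteness; with it, your appeal to stable-envelope properties becomes unnecessary. The second limit is then indeed handled by swapping the roles of $a$ and $z$, as you say.
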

\begin{proof}
Assume that both $\bA$ and $\bK$ are one-dimensional. The general case then follows from choosing arbitrary one-dimensional subtori in $\bA$ and $\bK$. We prove the Lemma for the first limit. For the second the argument is the same after switching the roles of $a$ and $z$.

{\rrr For a one-dimensional torus $\bK$ there are only two chambers. Thus, to show that the limits $z\to0_{\mathfrak{D}}$ exist for all chambers $\mathfrak{D}$
we need to show that the expression
$$
z^{-\chi_{\lambda}(\wall,\cdot)+\chi_{\mu}(\wall,\cdot)} \lim\limits_{q\to 0} s(a q^{\wall},z)
$$ 
has well defined limits as $z\to 0$ and $z^{-1}\to 0$.

As $s(a,z)$ is balanced and poles in $a$ and $z$ are separated, it must have the form:
	\be \label{terms}
	s(a,z) = \sum \, f(a) g(z)\prod\limits_{i} \dfrac{\vartheta(a^{n_i} z^{m_i})}{\vartheta(a^{n_i}) \vartheta(z^{m_i})} 
	\ee
	where $f(a)$ and $g(z)$ are some balanced sections depending only on $a$ and~$z$ respectively.
We note that 
$$
\lim\limits_{q\to 0} \vartheta(x)=x^{1/2}-x^{-1/2}
$$
which is obvious from the definition (\ref{prodform}).
Using this and (\ref{thetlim}) we compute
	$$
	\lim\limits_{q\to 0} s( a q^{\wall},z) =  r(a,z)	\prod\limits_{i} \left\{
	\begin{array}{ll}
 \dfrac{z^{-\lfloor \wall n_i\rfloor  m_i }}{z^{m_i}-1},& \wall n_i \not\in \mathbb{Z} \\
 \dfrac{(1-a^{n_i} z^{m_i}) z^{-n_i m_i \wall_i}}{(1-a^{n_i})(z^{m_i}-1)}, & \wall n_i \in \mathbb{Z}
	\end{array}\right.
	$$
	with $r(a,z)$ such that the limits 
	$
	\lim\limits_{z^{\pm 1}\to 0} r(a,z) \in \matQ(a)
	$ 
	exist. 
	
If $\bA$ and $\bK$ are one-dimensional then  $\wall \in \matR$ and $\chi_{\lambda}(\cdot,\cdot) \in \matZ$. Using~(\ref{thetatransf}) from (\ref{quasper}) we compute that
	\be \label{lsum}
	\chi_{\lambda}(\cdot,\cdot)-\chi_{\mu}(\cdot,\cdot) =- \sum\limits_{i} n_i m_i.
	\ee
	The terms in the sum (\ref{terms}) are sections of the same line bundle, and thus the value (\ref{lsum}) must be the same for each term. Thus, we have	
	$$
z^{-\chi_{\lambda}(\wall,\cdot)+\chi_{\mu}(\wall,\cdot)}	\lim\limits_{q\to 0} s( a q^{\wall},z) =  r(a,z)	\prod\limits_{i} \left\{
	\begin{array}{ll}
 \dfrac{z^{m_i(\wall n_i-\lfloor \wall n_i\rfloor)}}{z^{m_i}-1},& \wall n_i \not\in \mathbb{Z} \\
 \dfrac{(1-a^{n_i} z^{m_i})}{(1-a^{n_i})(z^{m_i}-1)}, & \wall n_i \in \mathbb{Z}
	\end{array}\right.
	$$	
It is clear that 
$$
\lim_{z^{\pm 1} \to 0} \dfrac{(1-a^{n_i} z^{m_i})}{(1-a^{n_i})(z^{m_i}-1)} < \infty, \ \ \ 	\lim\limits_{z^{\pm}\to 0} \dfrac{z^{m_i(\wall n_i - \lfloor \wall n_i \rfloor )}}{z^{m_i}-1}<\infty
$$
The second pair of limits follows from 
	$
	0 \leq \wall n_i - \lfloor \wall n_i \rfloor < 1.
	$}
\end{proof}

\section{Subvarieties invariant under finite subgroups}
\subsection{\label{wshift}}
Let $\mc \subset \bA$  be a cyclic subgroup of order $b$ and let $X^{\mc}$ be its fixed set. The action of $\bA$ on $X^{\mc}$ factors through the map 
$
\psi: \bA\to \bA^{'} = \bA/\mc. 
$
We denote $\mathsf{Z}= \psi^{-1} (q^{\textrm{cochar}(\bA')})$.
The group $\mathsf{Z}$ acts on $\textsf{E}_{\bT}(X^{\mc})$ by translations in the equivariant parameters $a\to a q^{\wall}$.

We fix an element $q^{\wall} \in \mathsf{Z}$ such that 
$(q^{\wall})^{b}\in q^{\textrm{cochar}(\bA)\setminus \{0\}}$ but $(q^{\wall})^{m}\not \in q^{\textrm{cochar}(\bA)}$ for $0<m<b$.  We denote the corresponding translation of the elliptic cohomology scheme by~$\omega_{\wall}$:
\[
\xymatrix{
	\mathsf{E}_{\bT}(X^{\mc})\ar[r]^{\omega_{\wall} }\ar[d]^{\pi}&
	\mathsf{E}_{\bT}(X^{\mc})\ar[d]^{\pi}\\
	\cB_{\bT,X^{\mc}}\ar[r]^{a\to a q^{\wall}}&\cB_{\bT,X^{\mc}}. }
\]

\subsection{}
The restriction of the polarization to a fixed point has the following decomposition:
\be \label{polparts}
P_{\lambda}=P_{\lambda,>0}\oplus P_{\lambda,<0}\oplus P_{\lambda,0} \in K_{\bA}(pt)
\ee
where the three terms denote the parts whose $\bA$-characters take positive, negative or zero values at the chamber $\fC$.
The positive part is called {\it index} of the fixed point $\lambda$:
$$
\ind_{\lambda} = P_{\lambda,>0}  \in K_{\bA}(pt).
$$
Similarly we have a decomposition of the tangent spaces at the fixed points:
$$
T_{\lambda} X=N^{+}_{\lambda} \oplus N^{-}_{\lambda}.
$$
Assume $\ind_{\lambda}$ is of the form
$$
\ind_{\lambda}=\sum\limits_{\sigma } \, a^{\sigma}.
$$
i.e., the sum is over the set of $\bA$-characters appearing in $\ind_{\lambda}$.
Then, for $\wall \in \textrm{cochar}(\bA)\otimes \matR$ we denote
$$
\lfloor \ind_{\lambda} \cdot \wall \rfloor = \sum_{\sigma} \, \lfloor \langle\sigma, \wall\rangle \rfloor .
$$

\begin{Lemma} \label{lllem} {\it 
If $P^{\mc}_{\lambda},N^{-,\mc}_{\lambda},{\ind^{\mc}_{\lambda}}$ denote  
$\mc$-invariant parts of $P_{\lambda},N^{-}_{\lambda}$ and ${\ind_{\lambda}}$, then for $\wall$ as in Section \ref{wshift}  we have:}
\begin{small}
\begin{align}
     \label{lemlim}
		 \lim\limits_{q\to 0} \left(\left[\dfrac{\Theta(N^{-}_{\lambda})}{\Theta(P_{\lambda})}\right]_{a=a q^{\wall}}	\right) =
		{\rrr (-1)^{\mathrm{rk}(\ind_{\lambda})-\mathrm{rk}(\ind^{\mc}_{\lambda})} \dfrac{\hbar^{\lfloor\ind_{\lambda} \cdot \wall \rfloor+\mathrm{rk}(\ind_{\lambda})/2 } }{\det(\ind^{\mc}_{\lambda}) \det(P_{\lambda,0})^{1/2}}
		 \dfrac{\Lambda^{\!\bullet}(\bar{N}^{-,\mc}_{\lambda}) }{\Lambda^{\!\bullet}(\bar{P}^{\mc}_{\lambda})}}
\end{align}
\end{small}

\end{Lemma}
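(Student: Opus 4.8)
The plan is to strip $\frac{\Theta(N^{-}_{\lambda})}{\Theta(P_{\lambda})}$ down to the part controlled by the index, and then reduce the whole limit to the scalar identity (\ref{thetlim}) applied one character at a time.

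\textbf{Step 1 (reduction to the index).} The defining property of a polarization gives $T_{\lambda}X = P_{\lambda} + \hbar^{-1}\overline{P_{\lambda}}$, so splitting by the chamber $\fC$ yields $N^{-}_{\lambda} = P_{\lambda,<0}\oplus \hbar^{-1}\overline{\ind_{\lambda}}$ (the negative part of $\hbar^{-1}\overline{P_{\lambda}}$ is $\hbar^{-1}$ times the dual of $P_{\lambda,>0}=\ind_{\lambda}$). Hence $\Theta(P_{\lambda,<0})$ cancels in the ratio, and applying the oddness (\ref{oddthe}) to each theta factor of $\Theta(\hbar^{-1}\overline{\ind_{\lambda}})$ gives
\[
\frac{\Theta(N^{-}_{\lambda})}{\Theta(P_{\lambda})}
= (-1)^{\mathrm{rk}\,\ind_{\lambda}}\,
\frac{\Theta(\hbar\,\ind_{\lambda})}{\Theta(\ind_{\lambda})\,\Theta(P_{\lambda,0})}.
\]
Since $P_{\lambda,0}$ carries trivial $\bA$-character, it is untouched by $a\mapsto a q^{\wall}$ and converges to $\ss(P_{\lambda,0})$; the whole problem is thus reduced to the limit of $\Theta(\hbar\,\ind_{\lambda})/\Theta(\ind_{\lambda})$, a product of ratios $\vartheta(\hbar\,a^{\sigma}q^{\langle\sigma,\wall\rangle})/\vartheta(a^{\sigma}q^{\langle\sigma,\wall\rangle})$ over the characters $\sigma$ of $\ind_{\lambda}$.

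\textbf{Step 2 (the wall detects $\mc$-invariance).} The crucial arithmetic input is that for $\wall$ as in Section \ref{wshift}, a character $\sigma\in\mathrm{char}(\bA)$ has $\langle\sigma,\wall\rangle\in\matZ$ if and only if $\sigma$ is $\mc$-invariant. Indeed $b\wall\in\mathrm{cochar}(\bA)$ forces $\langle\sigma,\wall\rangle\in\tfrac1b\matZ$, and the residue $\langle\sigma,\wall\rangle\bmod\matZ$ factors through the perfect pairing $\widehat{\mc}\times\bigl(\mathrm{cochar}(\bA')/\mathrm{cochar}(\bA)\bigr)\to\matQ/\matZ$. The genericity requirement that $(q^{\wall})^{m}\notin q^{\mathrm{cochar}(\bA)}$ for $0<m<b$ says exactly that $\wall$ generates the cyclic group $\mathrm{cochar}(\bA')/\mathrm{cochar}(\bA)\cong\matZ/b$, so the residue vanishes precisely on $\mathrm{char}(\bA')$, i.e. on the $\mc$-invariant characters.

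\textbf{Step 3 (termwise limit and assembly).} Next I apply (\ref{thetlim}) to each factor with $\hbar$ in the role of $z$ and $\langle\sigma,\wall\rangle$ in the role of $\wall$. By Step 2 the $\mc$-invariant characters fall into the integer case; using $\tfrac{1-\hbar a^{\sigma}}{1-a^{\sigma}}=\hbar^{1/2}\,\ss(\hbar a^{\sigma})/\ss(a^{\sigma})$ they assemble into $\ss(\hbar\,\ind^{\mc}_{\lambda})/\ss(\ind^{\mc}_{\lambda})$, while the non-invariant characters fall into the non-integer case and contribute only powers of $\hbar$ together with signs. The floors combine by the very definition $\lfloor\ind_{\lambda}\cdot\wall\rfloor=\sum_{\sigma}\lfloor\langle\sigma,\wall\rangle\rfloor$, the accumulated half-integers collect into $\tfrac12\mathrm{rk}(\ind_{\lambda}-\ind^{\mc}_{\lambda})$, and the signs from the non-integer terms combine with the $(-1)^{\mathrm{rk}\,\ind_{\lambda}}$ of Step~1 to give $(-1)^{\mathrm{rk}(\ind_{\lambda}-\ind^{\mc}_{\lambda})}$. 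Rewriting $\ss(\hbar\,\ind^{\mc}_{\lambda})/(\ss(\ind^{\mc}_{\lambda})\ss(P_{\lambda,0}))$ back through the $\mc$-invariant part of the relation $N^{-,\mc}_{\lambda}=P^{\mc}_{\lambda,<0}\oplus\hbar^{-1}\overline{\ind^{\mc}_{\lambda}}$ produces the first equality. The second equality is then pure K-theoretic algebra: substitute $\ss(V)=\det(V)^{1/2}\Lambda^{\!\bullet}(\bar V)$ and use the symplectic self-duality $N^{+}_{\lambda}=\hbar^{-1}\overline{N^{-}_{\lambda}}$ on the $\mc$-invariant part to convert the determinant prefactors into $\det(\ind_{\lambda})$, $\det(P_{\lambda,0})^{1/2}$ and a power of $\hbar$.

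I expect the genuine difficulty to lie entirely in the bookkeeping of Steps 3--4: tracking the fractional parts, the $\pm$ signs, and — most delicately — the half-integer powers of $\hbar$ coming from the non-$\mc$-invariant directions. These half-integer contributions depend sensitively on the chosen square roots $\det(P)^{\pm1/2}$ and on the orientation and polarization conventions of \cite{AOElliptic,EllipticHilbert}, and reconciling them (together with the exact sign of the exponent $\lfloor\ind_{\lambda}\cdot\wall\rfloor$ and the determinant identity forcing the first and second displayed lines to coincide) is the crux of the argument; the reduction of Step~1 and the invariance dichotomy of Step~2 are the structurally essential but computationally routine parts.
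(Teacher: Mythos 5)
Your skeleton is exactly the paper's intended argument: the paper's entire proof of Lemma \ref{lllem} is the single sentence ``Follows directly from (\ref{thetlim})'', and your Steps 1--3 (strip the ratio down to the index via the polarization decomposition, observe that $\langle\sigma,\wall\rangle\in\matZ$ precisely for the $\mc$-invariant characters, then apply (\ref{thetlim}) factor by factor) are the details that sentence suppresses. Step 2 in particular is correct and is the key arithmetic input: the paper uses it only implicitly (``only the factors $\vartheta(a^n\cdots)$ with $n\mid b$ can contribute'' in the proof of Theorem \ref{manth}), and your justification via the perfect pairing $\widehat{\mc}\times\bigl(\mathrm{cochar}(\bA')/\mathrm{cochar}(\bA)\bigr)\to\matQ/\matZ$ is clean and worth keeping.

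The execution, however, has concrete problems, and they sit exactly where the content of the lemma is. First, Step 1 contradicts the paper's own convention (\ref{npol}), which reads $N^{-}_{\lambda}=P_{\lambda,<0}+\hbar \bar{P}_{\lambda,>0}$, not $P_{\lambda,<0}\oplus\hbar^{-1}\bar{\ind}_{\lambda}$. This is not cosmetic: with your convention the reduction gives $\Theta(\hbar\,\ind_{\lambda})$, and (\ref{thetlim}) with $z=\hbar$ then produces $\hbar^{-\lfloor\langle\sigma,\wall\rangle\rfloor-1/2}$ per non-invariant character, so every $\hbar$-exponent in your final formula comes out with the opposite sign, $\hbar^{-\lfloor\ind_{\lambda}\cdot\wall\rfloor-\cdots}$; to land on (\ref{lemlim}) you must use (\ref{npol}), which after oddness (\ref{oddthe}) gives $(-1)^{\mathrm{rk}(\ind_{\lambda})}\Theta(\hbar^{-1}\ind_{\lambda})/\bigl(\Theta(\ind_{\lambda})\Theta(P_{\lambda,0})\bigr)$. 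Second, the non-integer case of (\ref{thetlim}) contributes no signs at all (it is a pure power of $z$), so attributing part of $(-1)^{\mathrm{rk}(\ind_{\lambda}-\ind^{\mc}_{\lambda})}$ to ``signs from the non-integer terms'' is wrong; the second sign $(-1)^{\mathrm{rk}(\ind^{\mc}_{\lambda})}$ arises from a second application of (\ref{oddthe}), when the invariant block $\ss(\hbar^{-1}\ind^{\mc}_{\lambda})$ is converted back into $\ss(\hbar\bar{\ind}^{\mc}_{\lambda})$ inside $\ss(N^{-,\mc}_{\lambda})$. Third, and most seriously: you correctly notice the accumulated half-integers $\tfrac12\mathrm{rk}(\ind_{\lambda}-\ind^{\mc}_{\lambda})$ but then simply assert the first equality of (\ref{lemlim}). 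Carrying your own Step 3 to the end (with the correct convention) gives $(-1)^{\mathrm{rk}(\ind_{\lambda}-\ind^{\mc}_{\lambda})}\,\hbar^{\lfloor\ind_{\lambda}\cdot\wall\rfloor+\mathrm{rk}(\ind_{\lambda}-\ind^{\mc}_{\lambda})/2}\,\ss(N^{-,\mc}_{\lambda})/\ss(P^{\mc}_{\lambda})$, and that residual half-integer power of $\hbar$ does not cancel; likewise no ``determinant identity'' makes the two displayed lines of (\ref{lemlim}) literally coincide --- using $\ss(V)=\det(V)^{1/2}\Lambda^{\!\bullet}(\bar{V})$ they differ by $\det(\ind_{\lambda}-\ind^{\mc}_{\lambda})\,\hbar^{-\mathrm{rk}(\ind_{\lambda}-\ind^{\mc}_{\lambda})/2}$. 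These offsets are constant along each component of $X^{\mc}$, which is why they are invisible in everything the lemma feeds into (Lemma \ref{diflem}, Theorems \ref{thm2} and \ref{hsthm} only use differences between $\lambda$ and $\mu$ in the same component), but a proof of the statement as written must either record them or fix the normalization conventions. Since you yourself declare this reconciliation ``the crux'' and leave it undone, the proposal stops short of proving precisely what the lemma asserts --- namely, the signs and $\hbar$-powers.
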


\begin{proof}
In our notations, the tangent space at a fixed point $\lambda$ equals:
$$
T_{\lambda} X= P_{\lambda}+\hbar\, {P}^{*}_{\lambda}
$$
Thus, its repelling part is
$$
N^{-}_{\lambda}=P_{\lambda,<0}+\hbar\, (P_{\lambda,>0})^{*}
$$
and thus
$$
\dfrac{\Theta(N^{-}_{\lambda})}{\Theta(P_{\lambda})}=\dfrac{\Theta(P_{\lambda,<0})\Theta(\hbar\, (P_{\lambda,>0})^{*})}{\Theta(P_{\lambda,<0}) \Theta(P_{\lambda,>0}) \Theta(P_{\lambda,0})}=(-1)^{\textrm{rk}(\ind_{\lambda})} \dfrac{\Theta(\hbar^{-1}\, P_{\lambda,>0})}{\Theta(P_{\lambda,>0})} \dfrac{1}{\Theta(P_{\lambda,0})}
$$
where the last equality is by (\ref{oddthe}).
In this form, the limit is easily computed from (\ref{thetlim}). The result follows after some simple algebra. 	


\end{proof}

\subsection{}
{\rrr Let us recall that the elliptic stable envelope is normalized by its restriction near the diagonal:
$$
\left.\Stab^{Ell}_{X,\fC,P}(\lambda)\right|_{\lambda}=(-1)^{\mathrm{rk}(\ind_{\lambda})} \Theta(N^{-}_{\lambda})
$$
see Section 3.3.5 in \cite{AOElliptic}. We recall also that the K-theoretic stable envelope can be obtained from the elliptic via limit (\ref{kthlim}). In particular, the $K$-theoretic stable envelope in this approach is normalized by its diagonal components,
\be \label{ksdia}
\left.\Stab^{Kth,[s]}_{X,\fC,P}(\lambda)\right|_{\lambda}=\dfrac{(-1)^{\mathrm{rk}(\ind_{\lambda})} \hbar^{\mathrm{rk}(\ind_{\lambda})/2}}{\det(\ind_{\lambda})}\, \Lambda^{\!\bullet} (\bar{N}^{-}_{\lambda})
\ee
which we also assume in this paper.
 }
 

\subsection{} 
The chamber $\fC$ and the $\mc$-invariant part of the polarization $P^{\mc} \in K_{\bT}(X^{\mc})$ define the elliptic and K-theoretic stable envelopes for $X^{\mc}$.  The inclusion $X^{\mc} \to X$ induces a map of extended elliptic cohomology schemes:
$$
i: \ \ \textsf{E}_{\bT}(X^{\mc})\to \textsf{E}_{\bT}(X).
$$
If $\mathfrak{D}$ is  a chamber from (\ref{dcham}) then we denote by $\mathfrak{D}'$ the corresponding chamber for $X^{\mu_b}$ defined by the property:
\be \label{chamdp}
\kappa (\mathfrak{D}) \subset \mathfrak{D}' 
\ee
where  $\kappa:\Pic(X)\otimes \matR\to \Pic(X^{\mc})\otimes \matR$ is the induced map.

For $\wall \in \textrm{cochar}(\bA)\otimes \matR$ let us define a $\textrm{char}(\bK)$-valued function on $X^{\bT}$ by
$$
\lambda \to \chi_{\lambda}(\wall,\cdot) \in \textrm{char}(\bK).
$$
Here is our main result.
\begin{Theorem} \label{manth}
{\it Assume that Conjecture \ref{conj1} holds for a variety $X$.  Let $\omega_{\wall}$ be the translation in equivariant parameters as in Section \ref{wshift}. Define 
$$
S:=\lambda \to \Lambda^{\!\bullet}(\bar{P}^{\mc}) \circ \omega^{*}_{\wall} \circ i^{*} \left( \dfrac{\Stab^{Ell}_{X,\fC,P} (\lambda) }{\Theta(P)}  \right) \circ \det(P_{0,\lambda})^{1/2}, 
$$
then
\begin{align} \nonumber
 &\lim\limits_{z\to 0_{\mathfrak{D}}} \left( z^{\chi(\wall,\cdot)-\chi_{\lambda}(\wall,\cdot)}  \lim\limits_{q\to 0}  S(\lambda) \right) 
	 =\gamma_{\lambda}\, \Stab^{\mathfrak{D}'}_{X^{\mc},\fC,P^{\mc}}(\lambda).  \label{mainlim} 
\end{align}
with coefficient $\gamma_{\lambda}=\hbar^{\lfloor \ind_{\lambda}\cdot \wall\rfloor +\mathrm{rk}(\ind_{\lambda})/2- \mathrm{rk}(\ind^{\mc}_{\lambda})/2}$.
}
\end{Theorem}
\begin{proof}
Let us assume that $\bA\cong \matC^{\times}$. If not, we can choose a cocharacter $\matC^{\times} \to \bA$ whose image contains $\mc$, then the shift $\omega^{*}_{\wall}$ does not affect the equivariant parameters in $\bA/\matC^{\times}$ and thus they do not change the limit $q\to 0$.

Let us denote 
$$
E_{\lambda,\mu}(a,z):= i^{*}\left.\left(\dfrac{\Stab^{Ell}_{X,\fC,P} (\lambda) }{\Theta(P)}\right)\right|_{\mu}. 
$$ 	
These are the fixed point components of certain meromorphic section over $\textsf{E}_{\bT}(X^{\mc})$. We have
	\be \label{ecomp}
	E_{\lambda,\mu}(a q^{\wall},z)=\left.\omega^{*}_{\wall} \circ i^{*} \left(\dfrac{\Stab^{Ell}_{X,\fC,P} (\lambda) }{\Theta(P)}\right)\right|_{\mu} 
	\ee
By Corollary \ref{balcor} the sections $E_{\lambda,\mu}(a,z)$ are balanced, i.e., are of the form (\ref{balform}).  By Lemma \ref{wlim} the limit 
$$
\lim\limits_{q\to 0} E_{\lambda,\mu}(a q^{\wall},z)
$$ 
exists. By (\ref{thetlim}), only the factors $\vartheta(a^n\dots)$ ( $\dots$ stands for monomials in $z$ and $\hbar$) in the numerator and denominator of $E_{\lambda,\mu}(a q^{\wall},z)$ with ${\rrr b\mid n}$ can contribute a nontrivial function of $a$ in this limit. The factors with ${\rrr b\nmid n}$ in the limit $q\to 0$ can only produce monomials in $z$ and $\hbar$. 
The factors $\Theta(P)$ with ${\rrr b\mid n}$ are exactly those from the invariant part $\Theta(P^{\mc})$. Thus, one can cancel all  poles in the limit of (\ref{ecomp}) by tensoring it with $\Lambda^{\!\bullet}(\bar{P}^{\mc})$. We conclude that
$$
K_{\lambda,\mu}(a,z)=\Lambda^{\!\bullet}(\bar{P}^{\mc}_{\mu}) \otimes \lim\limits_{q\to 0} E_{\lambda,\mu}(a q^{\wall},z) \otimes \det(P_{\lambda,0})^{1/2}
$$
are holomorphic in equivariant parameters $a$.

These are the fixed point components of a holomorphic (in $a$) function on $\textrm{Spec}(K_{\bT}(X^{\mc}))\otimes \bK$, which we denote by	
\be \label{kclass}
	K(\lambda):=\Lambda^{\!\bullet}(\bar{P}^{\mc}) \circ  \lim\limits_{q\to 0}  \omega^{*}_{\wall} \circ i^{*} \left( \dfrac{\Stab^{Ell}_{X,\fC,P} (\lambda) }{\Theta(P)}  \right) \circ \det(P_{\lambda,0})^{1/2} 
\ee
From the support condition for $\Stab^{Ell}_{X,\fC,P}(\lambda)$ (see Section 3.3.5 \cite{AOElliptic}) we find that $K(\lambda)$ is supported at:
	\be \label{support}
	\textrm{Supp}(K(\lambda)) \subset X^{\mc} \cap \textsf{Attr}^{f}_X(\lambda) =\textsf{Attr}^{f}_{X^{\mc}}(\lambda).
	\ee
By definition of the elliptic stable envelope $\left.\Stab^{Ell}_{X,\fC,P} (\lambda)\right|_{\lambda}={\rrr (-1) ^{\mathrm{rk}(\ind_{\lambda})}}\Theta(N^{-}_{X,\lambda})$. The factors in $\Theta(N^{-}_{X,\lambda})$ with ${\rrr b\mid n}$ are exactly those in $\Theta(N^{-}_{X^{\mc},\lambda})$.  From Lemma~\ref{lllem} 
	we find that the diagonal components of $K(\lambda)$ have the form:
	\be \label{nrm}
	\left.K(\lambda)\right|_{\lambda}= \hbar^{\lfloor \ind_{\lambda}\cdot \wall\rfloor +\mathrm{rk}(\ind_{\lambda})/2 } \dfrac{(-1)^{\mathrm{rk}(\ind^{\mc}_{\lambda})}}{\det(\ind^{\mc}_{\lambda})}  \Lambda^{\! \bullet}(\bar{N}^{- }_{X^{\mc},\lambda}).
	\ee
	
	
	The K-theoretic stable envelope is characterized by $a$-degree bound on its fixed point components, see Section 9.1.9 in \cite{pcmilect}. In particular, Proposition~\ref{kthcorol} implies that we have the following bounds:
\begin{equation} 	
\label{bound}
\begin{split}
  &	\deg_\bA\left( \lim\limits_{z\to 0_{\mathfrak{D}}}\Big(  \Lambda^{\!\bullet}(\bar{P}_{\mu})\otimes \lim\limits_{q\to0}\left.\dfrac{\Stab^{Ell}_{X,\fC,P}(\lambda)}{\Theta(P)}\right|_{\mu} \Big)\right)   \\
 &	\subset 	\deg_\bA\left( \lim\limits_{z\to 0_{\mathfrak{D}}}\Big(  \Lambda^{\!\bullet}(
	\bar{P}_{\mu})\otimes \lim\limits_{q\to0}\left.\dfrac{\Stab^{Ell}_{X,\fC,P}(\mu)}{\Theta(P)}\right|_{\mu} \Big)\otimes s_{\lambda} \otimes s_{\mu}^{-1}\right)
\end{split} 
\end{equation}
where $s_{\lambda}$ denotes the restriction of a line bundle $s\in \mathfrak{D}$ from (\ref{dcham}). 
 
If we consider the same limits with additional  shift $\omega^{*}_{\wall}$ as in (\ref{kclass}) the only the terms $\vartheta(a^n\dots)$ with ${\rrr b\mid n}$ contribute. Thus, taking the $\mc$-invariant part of (\ref{bound}) we obtain:
	\begin{align}  \label{window}
	& \deg_{\bA}( \lim\limits_{z\to 0_{\mathfrak{D}}} z^{\chi_{\lambda}(\wall,\cdot)-\chi_{\mu}(\wall,\cdot)} \left.K(\lambda)\right|_{\mu})\subset
	\deg_{\bA}(\lim\limits_{z\to 0_{\mathfrak{D}}}\left.K(\mu)\right|_{\mu} \otimes s_{\lambda} \otimes s_{\mu}^{-1})
	\end{align}
Note that the limits exist by Lemma \ref{limexlem}. By $\deg_{\bA}(f)$ we denote the Newton polytope of a Laurent polynomial $f$. Inclusion (\ref{window}) {\rrr denotes} the inclusion of the corresponding Newton polytopes.
Now, (\ref{support}), (\ref{nrm})  and (\ref{window}) say that the K-theory class
$$
\lim\limits_{z\to 0_{\mathfrak{D}}} z^{\chi_{\lambda}(\wall,\cdot)-\chi(\wall,\cdot)} K(\lambda)
$$
satisfies all three defining properties of the K-theoretic stable envelope with slope $s$, 
see Section 9 in \cite{pcmilect}.

Comparing (\ref{nrm}) with (\ref{ksdia}) we find that the normalization of this K-theoretic stable envelope differs from the one accepted in this paper by a factor

$$
\hbar^{\lfloor \ind_{\lambda}\cdot \wall\rfloor +\mathrm{rk}(\ind_{\lambda})/2- \mathrm{rk}(\ind^{\mc}_{\lambda})/2}
$$
The theorem follows from the uniqueness of the stable envelope in K-theory  see Proposition 9.2.2 in \cite{pcmilect}. 
\end{proof}

\subsection{} 
For practical computations, it might be more convenient to formulate the above theorem as follows. Let us consider the normalized matrix of restrictions:
\be \label{ttild}
\tilde{T}_{\lambda,\mu}(a,z):=\dfrac{\left.\Stab^{Ell}_{X,\fC,P}(\lambda)\right|_{\mu} }{\left.\Stab^{Ell}_{X,\fC,P}(\mu)\right|_{\mu} }. 
\ee
This is a triangular matrix with trivial diagonal $\tilde{T}_{\lambda,\lambda}(a,z)=1$ and other coefficients given by certain  elliptic functions.
Similarly we denote
\be \label{Kkth}
\tilde{K}_{\lambda,\mu}(a,\hbar):=\dfrac{\left.\Stab^{\mathfrak{D}'}_{X^{\mc},\fC,P^{\mc}}(\lambda)\right|_{\mu} }{\left.\Stab^{\mathfrak{D}'}_{X^{\mc},\fC,P^{\mc}}(\mu)\right|_{\mu}}
\ee
the matrix of K-theoretic stable envelopes of $X^{\mc}$ with a slope from $\mathfrak{D}'$ normalized in the  same fashion. 

\begin{Theorem}  \label{thm2}
{\it Assume that Conjecture \ref{conj1} holds for a variety $X$, then the matrix $\tilde{K}(a,\hbar)$ can be obtained from the matrix $\tilde{T}_{\lambda,\mu}(a,z)$ as the following limit:	
\be \label{seclim}
\lim\limits_{z\to 0_{\mathfrak{D}}} Z \Big(\lim\limits_{q\to 0} \tilde{T}(a q^{\wall},z)\Big) Z^{-1} =H \tilde{K}(a,\hbar) H^{-1}
\ee	
where $Z$ denotes the diagonal matrix
$$
Z:=\left.\mathrm{diag}(z^{\chi_{\lambda}(\wall,\cdot)})\right|_{\lambda \in X^{\bT}}
$$
and $H$ denotes the diagonal matrix with eigenvalues:
$$
H_{\lambda,\lambda}:=\hbar^{\lfloor \ind_{\lambda}\cdot \wall \rfloor+\textrm{rk}(\ind_{\lambda})/2-\textrm{rk}(\ind^{\mc}_{\lambda})/2}\, \det(P_{\lambda,0})^{-1/2}\, 
$$

}
\end{Theorem}

\begin{proof}
We note that
\be \label{npol}
N_{\lambda}^{-}=P_{\lambda,<0}+\hbar \bar{P}_{\lambda,>0} 
\ee
and thus by (\ref{polparts}) we have 
\be \label{zfrat}
    \dfrac{\left.\Stab^{Ell}_{X,\fC,P}(\lambda)\right|_{\lambda}}{\Theta(P_{\lambda})}=\dfrac{(-1)^{\textrm{rk}(\ind_{\lambda})} \Theta(N^{-}_{\lambda})}{\Theta(P_{\lambda})}=\dfrac{1}{\Theta(P_{\lambda,0})} \dfrac{\Theta(P_{\lambda,>0} \hbar^{-1})}{\Theta(P_{\lambda,>0})}.
\ee
We conclude that this ratio is a balanced function in the equivariant parameters~$a$. Dividing any balanced function by this ratio is clearly a balanced function again and thus all elliptic functions (\ref{ttild}) are balanced in $a$. By Lemma \ref{wlim} we conclude that the limits $q\to 0$ in (\ref{seclim}) 
are well defined for all~$\wall$. 

Conjugation by the diagonal matrix $Z$ gives:
$$
 \tilde{T}_{\lambda,\mu}(a,z) \to  z^{\chi_{\mu}(\wall,\cdot)-\chi_{\lambda}(\wall,\cdot)} \tilde{T}_{\lambda,\mu}(a,z),
$$
Thus, the existence of the limit $z\to 0_{\mathfrak{D}}$ follows from Theorem \ref{manth} (note that the ratio (\ref{zfrat}) does not depend on the K\"ahler parameters and thus can not affect  asymptotic behavior at $z\to 0_{\mathfrak{D}}$). 

Applying Theorem \ref{manth} we find:
\begin{align}
& \left(\lim\limits_{z\to 0_{\mathfrak{D}}} Z \Big(\lim\limits_{q\to 0} \tilde{T}(a q^{\wall},z,\hbar,q)\Big) Z^{-1}\right)_{\lambda,\mu} = & \nonumber \\  \nonumber
& \dfrac{\hbar^{\lfloor \ind_{\lambda}\cdot \wall \rfloor+\textrm{rk}(\ind_{\lambda})/2-\textrm{rk}(\ind^{\mc}_{\lambda})/2}\, \det(P_{\lambda,0})^{-1/2}\, \left.\Stab^{\mathfrak{D}'}_{X^{\mc},\fC,P^{\mc}}(\lambda)\right|_{\mu}}{ \hbar^{\lfloor \ind_{\mu}\cdot \wall \rfloor+\textrm{rk}(\ind_{\mu})/2-\textrm{rk}(\ind^{\mc}_{\mu})/2}\, \det(P_{\mu,0})^{-1/2}\,\left.\Stab^{\mathfrak{D}'}_{X^{\mc},\fC,P^{\mc}}(\mu)\right|_{\mu}}= \tilde{K}_{\lambda,\mu}(a,\hbar) H_{\lambda,\lambda}/H_{\mu,\mu}&
\end{align}
which finished the proof. 
\end{proof}

\subsection{} 
For $\wall \in \Lie_{\matR}(\bA)$ let us consider the cyclic subgroup ${\bs \nu}_{\wall} =\langle e^{2\pi i \wall} \rangle \subset \bA$. We denote
$$
\textsf{Res}(X)=\{ \wall \in \Lie_{\matR}(\bA): X^{{\bs \nu}_{\wall}}\neq X^{\bA} \}.
$$
We will call the points from $\textsf{Res}(X) \subset \Lie_{\matR}(\bA)$ {\it resonances}. We will now show that this terminology is in agreement with  Definition \ref{resdef}.

\begin{Proposition}{\it 
\begin{enumerate} 
\indent 
The sets 
\item $S_1=\mathsf{Res}(\{ \tilde{T}_{\lambda,\mu}(a,z) \}_{\lambda,\mu \in X^{\bT}}),$
 \item  $S_2=\mathsf{Res}(X),$ 
    \item $S_3=\{ \wall \in \Lie_{\matR}(\bA): \langle \alpha,\wall \rangle + m=0, \ \  m\in \matZ, \ \ \alpha \in \mathrm{char}_{\bA}(T_{\lambda} X), \  \lambda\in X^{\bA}  \},$
\end{enumerate}
 are equal.}
\end{Proposition}

\begin{proof}
Assume that $\wall \in S_2$, then $X^{{\bs \nu}_{\wall}}\neq X^{\bA}$.  $X^{{\bs \nu}_{\wall}}$ is a $\bA$-invariant subvariety of $X$, with the same set of $\bA$-fixed points. Let $\lambda$ be an $\bA$-fixed point {\rrr in a} nontrivial component of $X^{{\bs \nu}_{\wall}}$ (i.e., this component does not consists of a single point $\lambda$). Let $a^{\alpha}$ be a an $\bA$ weight from $T_{\lambda} X^{{\bs \nu}_{\wall}}$. It is invariant under ${\bs \nu}_{\wall}$, which means that $e^{2 \pi i \langle \alpha, \wall\rangle }=1$, or that $\wall \in S_3$. Thus we showed that $S_2\subset S_3$.

Next assume $\wall \in S_3$. Then there exists a fixed point $\lambda$ and a direction in $T_{\lambda} X$ with character $\alpha$ for which $\langle \alpha,\wall \rangle \in \matZ$. This means that this whole direction in $X$ is preserved under the action of ${\bs \nu}_{\wall}$, i.e., $X^{{\bs \nu}_{\wall}}$ is larger that $X^{\bA}$. Thus $\wall \in S_2$ and therefore $S_3\subset S_2$. We conclude $S_2=S_3$. 

Next, assume that $\wall \in S_2$. The variety $X^{{\bs \nu}_{\wall}}$ is a non-trivial (not finite) and thus the matrix of restrictions of K-theoretic stable envelopes $\tilde{K}_{\lambda,\mu}(a)$ defined by (\ref{Kkth}) depends on parameters $a$ non-trivially. 
By Theorem \ref{thm2}, 
$$
\lim\limits_{z\to 0_{\mathfrak{D}}} Z \Big(\lim\limits_{q\to 0} \tilde{T}(a q^{\wall},z)\Big) Z^{-1}
$$
is then a non-trivial function of $a$. This is only possible if $\lim\limits_{q\to 0} \tilde{T}(a q^{\wall},z)$ is a non-trivial function of $a$. Thus,  thus $\wall \in S_1$ and so $S_2\subset S_1$.

Finally, the $a$-{\rrr balanced} sections $\tilde{T}_{\lambda,\mu}(a,z)$ as defined by (\ref{ttild}) all have denominators 
$$
\left.\Stab^{Ell}_{X,\fC,P}(\lambda)\right|_{\lambda} =\pm \prod\limits_{{a^{\alpha} \in \textrm{weights}_{\bA}(T_{\lambda}X)}\atop \langle l, \sigma \rangle <0} \, \vartheta(a^{l}\dots)
$$
where $\dots$ stand for some power of $\hbar$. We conclude that the set $S_1$ is a subset of the hyperplane arrangement 
$$
\langle l,\wall \rangle =m,
$$
where $m\in \matZ$ and $l$ runs over all characters appearing  $T_{\lambda} X$ such that $\langle l, \sigma \rangle <0$ (by Proposition \ref{prores} and Note \ref{notelam}). But, this is clearly the same set as for all $l$ appearing in $T_{\lambda} X$.
In other words $S_1 \subset S_3$. 

\end{proof}

\section{Application to the case of the Hilbert Scheme \label{hssec}}
In this section we consider an application of Theorem \ref{manth} to the case of $X$ given by the Hilbert scheme of $n$ points on $\matC^{2}$. In this section we follow notations of \cite{EllipticHilbert}, where the explicit formula  for the elliptic stable envelope for this variety was obtained. 
In particular, the tori $\bA\subset\bT$ acting on $X$, the set of fixed points $X^{\bT}$, the choice of the polarization $P$ and the chambers $\fC$ were described in Section 3 of \cite{EllipticHilbert}. Since $X$ is a quiver variety, Conjecture \ref{conj1} holds and thus Theorems \ref{manth} and \ref{thm2} can be applied.

\subsection{} 
Recall  that the Hilbert scheme $X$ is a Nakajima variety associated to the quiver in Fig.\ref{jord}, with dimension $n$, framing dimension $1$ and stability conditions:
$$
\theta_{\pm }: g\to \det(g)^{\pm 1}
$$ 
see \cite{NakajimaLectures1} or Section 3.3 in \cite{EllipticHilbert}.

\begin{figure}[H]
	\centering
	\includegraphics[width=4cm]{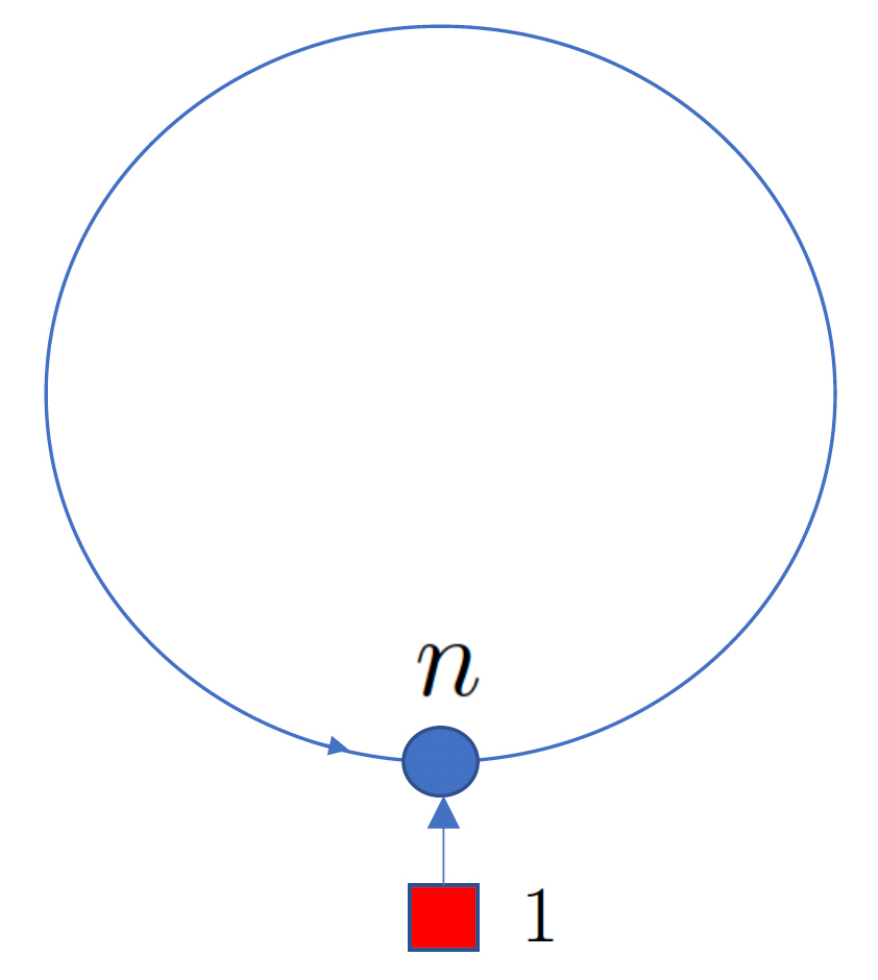}
	\caption{\label{jord} The quiver defining the Hilbert scheme $X$.}
\end{figure}
\noindent
For $b\in\mathbb{N}$ we consider the cyclic subgroup:
$$
\mc= \{ w^{k}, k=0,\dots,b-1 \}\subset \bA\cong \matC^{\times}.
$$
of  $b$-th roots of $1$ generated by $w=e^{2\pi \sqrt{-1}/b}$. 
\begin{Proposition}
{\it The fixed set of $\mc$ has the following form	
$$
X^{\mc}=\coprod_{{n_0,n_1,\cdots, n_{b-1}} \atop {n_0+\dots+n_{b-1}=n} } X(n_0,\dots,n_{b-1})
$$	
where $X(n_0,\dots,n_{b-1})$ is the Nakajima quiver variety associated with 
the cyclic quiver of length $b$ (see Fig.\ref{cyclic}) with dimensions $n_0,\dots,n_{b-1}$,  framing dimensions $r=(1,0,\dots,0)$ and stability conditions
$$
\theta^{b}_{\pm}: (g_0,\dots, g_{b-1})\to \prod\limits_{i=0}^{b-1} \det(g_i)^{\pm 1}.
$$.}
\end{Proposition}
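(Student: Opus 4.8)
The plan is to use the ADHM (Nakajima) presentation of $X=\mathrm{Hilb}^{n}(\matC^{2})$ as a GIT quotient and to track the $\mc$-action on the underlying linear data. Recall that a point of $X$ is given by a quadruple $(B_{1},B_{2},i,j)$ with $B_{1},B_{2}\in\mathrm{End}(V)$, $i\in\mathrm{Hom}(W,V)$ and $j\in\mathrm{Hom}(V,W)$, where $\dim V=n$ and $\dim W=1$, subject to $[B_{1},B_{2}]+ij=0$ and the $\theta_{+}$-stability condition that $\mathrm{im}(i)$ generates $V$ under $B_{1},B_{2}$; two quadruples give the same point iff they differ by the action of $\mathrm{GL}(V)$, namely $g\cdot(B_{1},B_{2},i,j)=(gB_{1}g^{-1},gB_{2}g^{-1},gi,jg^{-1})$. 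In these coordinates $\bA\cong\matC^{\times}$ acts by $(B_{1},B_{2},i,j)\mapsto(tB_{1},t^{-1}B_{2},i,j)$, so a generator $w$ of the order-$b$ group $\mc$ acts by $(B_{1},B_{2},i,j)\mapsto(wB_{1},w^{-1}B_{2},i,j)$.

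First I would reduce $\mc$-invariance to the existence of a genuine representation of $\mc$ on $V$. A stable quadruple is $w$-fixed precisely when there is $\phi\in\mathrm{GL}(V)$ with $\phi B_{1}\phi^{-1}=wB_{1}$, $\phi B_{2}\phi^{-1}=w^{-1}B_{2}$, $\phi i=i$ and $j\phi^{-1}=j$. Since the stabilizer of a stable quadruple in $\mathrm{GL}(V)$ is trivial (an automorphism fixes $\mathrm{im}(i)$ and commutes with $B_{1},B_{2}$, hence fixes the generated subspace, which is all of $V$), such a $\phi$ is unique; uniqueness then forces $g\mapsto\phi_{g}$ to be an honest homomorphism $\mc\to\mathrm{GL}(V)$ rather than a projective one, so $V$ becomes a $\matZ/b$-module.

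Next I would decompose $V=\bigoplus_{s=0}^{b-1}V_{s}$ into the $w^{s}$-eigenspaces of $\phi$ and set $n_{s}=\dim V_{s}$, so that $\sum_{s}n_{s}=n$. The intertwining relations make all the data graded: $\phi B_{1}=wB_{1}\phi$ yields $B_{1}(V_{s})\subseteq V_{s+1}$, likewise $B_{2}(V_{s})\subseteq V_{s-1}$, while $\phi i=i$ and $j\phi=j$ force $\mathrm{im}(i)\subseteq V_{0}$ and $j|_{V_{s}}=0$ for $s\neq 0$. This is exactly the (doubled) data of the length-$b$ cyclic quiver of Fig.\ \ref{cyclic} with dimension vector $(n_{0},\dots,n_{b-1})$ and framing $r=(1,0,\dots,0)$ concentrated at vertex $0$, and the single moment map relation $[B_{1},B_{2}]+ij=0$ breaks into its graded components, which are precisely the cyclic-quiver moment map equations at the vertices.

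It remains to match stability conditions and to promote this set-theoretic correspondence to an isomorphism of varieties. For graded data the subspace generated by $\mathrm{im}(i)\subseteq V_{0}$ under the graded maps $B_{1},B_{2}$ is itself graded, so $\theta_{+}$-stability on $X$ holds iff there is no proper \emph{graded} invariant subspace containing $\mathrm{im}(i)$, which is exactly $\theta^{b}_{+}$-stability for the cyclic quiver; correspondingly the residual gauge group preserving the grading is $\prod_{s}\mathrm{GL}(V_{s})$ and the character $\det^{\pm1}$ restricts to $\prod_{s}\det(g_{s})^{\pm1}=\theta^{b}_{\pm}$. Sending a fixed point to its graded datum is then a morphism into $\coprod X(n_{0},\dots,n_{b-1})$; the dimension vector $(n_{0},\dots,n_{b-1})$ is locally constant and so labels the open-and-closed pieces, while the reduction of structure group from $\mathrm{GL}(V)$ to $\prod_{s}\mathrm{GL}(V_{s})$ identifies the two GIT quotients. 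I expect the main obstacle to lie in this last step: one must verify that the correspondence is an isomorphism of schemes and not merely a bijection on closed points, equivalently that $X^{\mc}$ is smooth and carries exactly the symplectic and quiver structure of the cyclic-quiver variety, and that the two GIT quotients genuinely coincide. The linear-algebraic reductions above are routine once the representation $\phi$ has been produced.
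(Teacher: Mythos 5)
Your proposal is correct and takes essentially the same approach as the paper: the ADHM/quiver presentation of the Hilbert scheme, decomposition of $\matC^{n}$ into eigenspaces of a lift of $\mc$ to the gauge group, identification of the $\mc$-invariant data with the cyclic-quiver data, and matching of moment maps and stability. The paper's sketch simply asserts the eigenspace decomposition and leaves the stability comparison to the reader, whereas you justify exactly those points (uniqueness of the gauge compensation $\phi$ via trivial stabilizers, hence an honest $\matZ/b$-representation, and the graded-subspace argument for stability).
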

\begin{figure}[H]
	\centering
	\includegraphics[width=4.5cm]{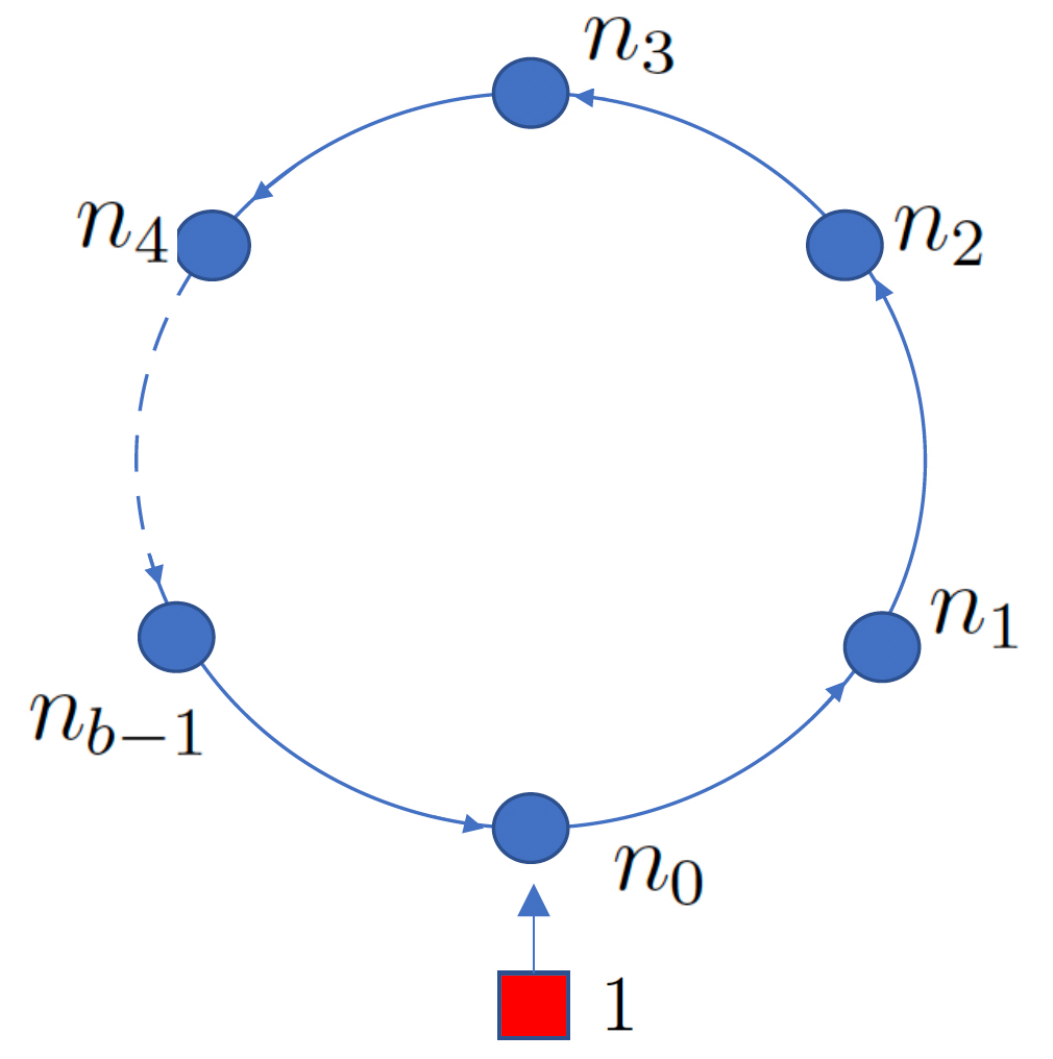}
	\caption{\label{cyclic} The quiver defining $X(n_0,n_1,\dots,n_{b-1})$.}
\end{figure}

We note that it is possible that $X(n_0,\dots,n_{b-1})=\varnothing$ for some choices $(n_0,\dots,n_{b-1})$.

\begin{proof}
Here is the sketch of a proof. As a Nakajima variety associated to Fig.\ref{jord}, $X$ is given by the symplectic reduction of 
$$
T^* R=T^*  \textrm{Hom}(\matC,\matC^n)\oplus T^*   \textrm{Hom}(\matC^n,\matC^n).
$$
by the natural action of $GL(n)$. 
Recall that the torus $\bA$, and thus $\mc$, act by scaling the loop in the quiver.  It means that, if $(J,Y)$ is an element from $R$  then $\mc$ acts by 
$(J,Y)\to (J,Y \omega)$.

We have a decomposition
\be \label{cdecom}
\matC^{n}=\bigoplus_{i=0}^{b-1} \matC^{n_i}, \ \ \  \left.\omega\right|_{\matC^{n_i}}=e^{\frac{2\pi \sqrt{-1} i}{b}}.
\ee
The $\mc$-invariant part of $R$ then has the form
$$
R^{\mc} = \textrm{Hom}(\matC,\matC^{n_0}) \oplus \bigoplus_{i=0}^{b-1} \textrm{Hom}(\matC^{n_i},\matC^{n_i+1}) \ \ \textrm{with}  \ \ \matC^{n_b}:=\matC^{n_0}.
$$
and the symplectic reduction of $T^* R^{\mc}$ is exactly the quiver variety associated to Fig.\ref{cyclic}.  

To complete the proof we also need to show that $\theta^{b}_{\pm}$-stable points in $T^{*} R^{\mc}$ satisfying the moment map condition for $\prod_i GL(n_i)$
are also $\theta_{\pm}$-semistable in $T^{*} R$ and satisfy the moment map condition for $GL(n)$. This is straightforward and we leave it to the reader.
\end{proof}
 
Recall that the fixed points $X^{\bT}=X^{\bA}$ are labeled by the Young diagrams with $n$ boxes. It is also clear from the previous proposition that
$$
X^{\bT}=\coprod\limits_{{n_0,n_1,\cdots, n_{b-1}} \atop {n_0+\dots+n_{b-1}=n} } X(n_0,\dots,n_{b-1})^{\bT}.
$$
\begin{Proposition}
{\it For a fixed point $\lambda \in X^{\bT}$ we have
$$
\lambda\in X(n_0,\dots,n_{b-1}) \ \ \Leftrightarrow \ \  |\{\Box\in \lambda: c(\Box)\!\!\! \mod b=i\}|=n_i, \ \ i=1,\dots,b-1.
$$	
where $c(\Box)$ is the content of a box $\Box$ in the Young diagram $\lambda$.}  
\end{Proposition}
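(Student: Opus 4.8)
The plan is to compute the $\mc$-action on the vector space $V=\matC^{n}$ attached to the fixed point $\lambda$ and to match its eigenspace decomposition with (\ref{cdecom}). First I would use the ADHM/Nakajima description of the fixed point: the monomial ideal $I_{\lambda}\subset\matC[x,y]$ gives an identification $V\cong \matC[x,y]/I_{\lambda}$ in which the monomials $x^{p}y^{q}$ with $(p,q)\in\lambda$ form a basis indexed by the boxes $\Box\in\lambda$. In this basis the loop $Y$ and its dual act as multiplication by $x$ and by $y$, while the framing map $J$ sends the generator of $W=\matC$ to the class of $1$.

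Next I would identify the $\mc$-action on $V$. Since $\mc\subset\bA$ acts on $R$ by scaling the loop, $(J,Y)\to(J,Y\omega)$, a point is $\mc$-fixed precisely when this scaling is absorbed by a cocharacter $\sigma\colon\bA\to GL(V)$ stabilizing the datum up to the $GL(n)$-action; this $\sigma$ is exactly the weight grading that $V$ acquires at the fixed point $\lambda$. Requiring $\sigma(a)$ to fix the framing vector (so the box $(0,0)$ has weight $0$) and to intertwine the scaled maps forces the recursions $c_{(p+1,q)}=c_{(p,q)}-1$ and $c_{(p,q+1)}=c_{(p,q)}+1$, hence the box $\Box=(p,q)$ acquires $\bA$-weight $a^{q-p}=a^{c(\Box)}$, where $c(\Box)$ is the content. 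Restricting the grading to $\mc=\{w^{k}\}$ then shows that $\mc$ acts on the line $\matC\cdot x^{p}y^{q}$ by $w^{c(\Box)}$, so the $w^{i}$-eigenspace of $V$ is spanned precisely by the boxes with $c(\Box)\equiv i\pmod b$, and its dimension is $|\{\Box\in\lambda:c(\Box)\bmod b=i\}|$.

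Finally I would compare this with the decomposition (\ref{cdecom}) produced in the proof of the previous proposition, in which $\matC^{n_{i}}$ is the summand on which $\omega$ acts by $w^{i}$. Matching the two gradings identifies $\matC^{n_{i}}$ with the span of the boxes of content $\equiv i\pmod b$, whence $n_{i}=|\{\Box\in\lambda:c(\Box)\bmod b=i\}|$. This gives the stated equivalence; the constraint is recorded only for $i=1,\dots,b-1$ since $n_{0}$ is then determined by $n_{0}+\dots+n_{b-1}=n$.

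The main obstacle is the bookkeeping in the second step: one must verify carefully that the cocharacter $\sigma$ stabilizing the fixed point really is the content grading (with the correct sign convention for $c(\Box)$), and that under this grading the $\mc$-invariant part of the loop realizes the cyclic arrows $\matC^{n_{i}}\to\matC^{n_{i+1}}$ of Fig.\ref{cyclic}, so that the eigenvalue labeling of (\ref{cdecom}) is consistent with the orientation of the quiver and with the identification of $R^{\mc}$ in the preceding proof. Once the weight $a^{c(\Box)}$ is pinned down, the remaining argument is purely combinatorial, amounting to sorting the boxes of $\lambda$ by content modulo $b$.
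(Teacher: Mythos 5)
Your proof is correct and takes essentially the same approach as the paper: both identify $V\cong\matC[x,y]/I_{\lambda}$ with the span of monomials indexed by the boxes of $\lambda$, observe that the $\bA$-weight of a box equals its content, and then sort the boxes by content modulo $b$ to match the eigenspace decomposition (\ref{cdecom}). The only difference is that you derive the content grading from equivariance of the ADHM data, while the paper states it directly from the description of fixed points as monomial ideals.
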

\begin{proof}
It is convenient to use the description of $X$ as a space of ideals in 
$\matC[x,y]$, see Section 3.1-3.2 in \cite{EllipticHilbert}. A box in the Young diagram $\lambda$ with coordinates $(i,j)$ then corresponds to the monomial $x^{j-1} y^{i-1}$. These monomials form a  basis of $\matC^{n}$ in (\ref{cdecom})  above. 
The  $\bA$-character of this monomial equals $i-j~=~c_{\Box}$. 
It means that $\omega$ acts on it by $e^{\frac{2 \pi \sqrt{-1} c_{\Box} }{b}}$ and thus it is from $\matC^{n_{c_{\Box}\!\!\mod b} }$. Since these monomials form a basis, we have
$$
n_{i} =\dim \matC^{n_{i} }=|\{\Box\in \lambda: c(\Box)\!\!\! \mod b=i\}|.
$$
\end{proof}

\subsection{} 
{\rrr The $\bT$-character of the canonical polarization $P_{\lambda}$ is given explicitly by 
\be \label{polarX}
P_{\lambda}=\sum\limits_{i \in \lambda } \varphi^{\lambda}_{i}+\sum\limits_{i,j \in \lambda }\, \dfrac{\varphi^{\lambda}_{i} t_1}{\varphi^{\lambda}_{j}} - \sum\limits_{i,j \in \lambda }\, \dfrac{\varphi^{\lambda}_{i}}{\varphi^{\lambda}_{j}}
\ee
Here the sums are over boxes $i,j$ in the Young diagram $\lambda$ representing the fixed point. The function $\varphi^{\lambda}_{i}$ denotes the $\bT$-content of the box $i$, see Sections 3 of \cite{SmirnovElliptic} for notations. 

To compute the index we substitute $t_1=a\hbar^{1/2}, t_2=a^{-1}\hbar^{1/2}$ to the Laurent polynomial (\ref{polarX}) and collect the terms with positive powers of $a$. Explicitly:
\be \label{indexX}
\ind_{\lambda}:=P_{\lambda,>0}=\sum\limits_{{i \in \lambda}\atop{c_i>0} } \varphi^{\lambda}_{i}+\sum\limits_{{i,j \in \lambda}\atop 
{\atop{c_i-c_j+1>0}}}\, \dfrac{\varphi^{\lambda}_{i} t_1}{\varphi^{\lambda}_{j}} - \sum\limits_{{{i,j \in \lambda }\atop }\atop{c_i-c_j>0}}\, \dfrac{\varphi^{\lambda}_{i}}{\varphi^{\lambda}_{j}}
\ee
Similarity, the $\bA$ -invariant part corresponds to the terms which do not depend on $a$: 
\be \label{stabpart}
P_{\lambda,0}=\sum\limits_{{i \in \lambda}\atop{c_i=0} } \varphi^{\lambda}_{i}+\sum\limits_{{i,j \in \lambda}\atop 
{\atop{c_i-c_j+1=0}}}\, \dfrac{\varphi^{\lambda}_{i} t_1}{\varphi^{\lambda}_{j}} - \sum\limits_{{{i,j \in \lambda }\atop } \atop{c_i-c_j=0}}\, \dfrac{\varphi^{\lambda}_{i}}{\varphi^{\lambda}_{j}}
\ee
The rank of index is the number of terms in (\ref{indexX}) weighted with sign, i.e.:
$$
\textrm{rk}(\ind_{\lambda}):=\sum\limits_{{i \in \lambda}\atop{c_i>0} } 1+\sum\limits_{{i,j \in \lambda} \atop 
{\atop{c_i-c_j+1>0}}}\  1 - \sum\limits_{{i,j \in \lambda } \atop {\atop{c_i-c_j>0}}}\ 1
$$
From (\ref{stabpart}) we also obtain:
$$
\det(P_{\lambda,0})=\Big(\prod\limits_{{i \in \lambda}\atop{c_i=0} } \varphi^{\lambda}_{i}\Big)\Big(\prod\limits_{{i,j \in \lambda}\atop 
{\atop{c_i-c_j+1=0}}}\, \dfrac{\varphi^{\lambda}_{i} t_1}{\varphi^{\lambda}_{j}}\Big) \Big(\prod\limits_{{{i,j \in \lambda }\atop } \atop{c_i-c_j=0}}\, \dfrac{\varphi^{\lambda}_{j}}{\varphi^{\lambda}_{i}}\Big)
$$
Since $\mc$ acts on (\ref{polarX}) by $a\to a\, e^{2 \pi \sqrt{-1}/b}$, the $\mc$-invariant part of the polarization $P^{\mc}_{\lambda}$ is obtained from (\ref{polarX}) by collecting the terms $a^{m}$ with $b\mid m$. Then, $\ind^{\mc}_{\lambda}$ and $\textrm{rk}(\ind^{\mc}_{\lambda})$ are computed from $P^{\mc}_{\lambda}$ in the same way. }

\subsection{}
For the Hilbert scheme $X$ we have  $\bK=\Pic(X)\otimes \matC^{\times}=\matC^{\times}$ and there are two chambers in (\ref{dcham}) corresponding to 
$$
z\to 0 \ \ \textrm{or} \ \  z\to \infty. 
$$
We will denote by $\mathfrak{D}^{\pm}$ the corresponding chambers (\ref{chamdp}) for a $\mc$-fixed point component $X(n_0,\dots,n_b)$.  These chambers correspond to the slopes from {\it canonical} and {\it anticanonical} alcoves of $X(n_0,\dots,n_b)$.

If $s(a,z)$ is as in (\ref{balsec}), then for the Hilbert scheme $X$ it has the following transformation laws:
$$
s(a q,z)=z^{d_{\lambda}-d_{\mu}} s(a,z), \ \  s(a,z q)=a^{d_{\lambda}-d_{\mu}} s(a,z)
$$
and thus $\chi_{\lambda}(\wall,\cdot)=\wall d_{\lambda}$. 
\subsection{} 
Let us choose a $\mc$-fixed component $X(n_{0},\dots,n_{b-1})\subset X$, and consider the matrix: 
$$
\tilde{T}_{\lambda,\mu}(a,z)=\dfrac{\left.\Stab^{Ell}_{X,\fC,P}(\lambda)\right|_{\mu}}{\left.\Stab^{Ell}_{X,\fC,P}(\mu)\right|_{\mu}}, \ \ \ \lambda,\mu \in X(n_0,\dots,n_{b-1})^{\bT}.
$$
and let
$$
\tilde{K}^{\pm}_{\lambda,\mu}(a,\hbar)=\dfrac{\left.\Stab^{\mathfrak{D}^{\pm}}_{X(n_0,\dots,n_{b-1}),\fC,P^{\mc}}(\lambda)\right|_{\mu}}{\left.\Stab^{\mathfrak{D}^{\pm}}_{X(n_0,\dots,n_{b-1}),\fC,P^{\mc}}(\mu)\right|_{\mu}}
$$
be the normalized matrix of restrictions of K-theoretic stable envelopes for the cyclic quiver variety $X(n_0,\dots,n_{b-1})$, with slopes corresponding to the canonical and anticanonical alcoves, then the Theorem 
\ref{thm2} gives:
\begin{Theorem} \label{hsthm}
{\it Let $\wall=\frac{a}{b}\in \matQ$ such that $\mathrm{gcd}(a,b)=1$. Then 
$$
\lim\limits_{z\to 0} Z \Big(\lim_{q\to 0} \tilde{T}(a q^{\wall},z)\Big)  Z^{-1}= H \tilde{K}^{+}(a,\hbar) H^{-1},
$$ 
$$
\lim\limits_{z\to \infty} Z \Big(\lim_{q\to 0} \tilde{T}(a q^{\wall},z)\Big)  Z^{-1}= H \tilde{K}^{-}(a,\hbar) H^{-1},
$$
with diagonal matrices $Z$ and $H$ which have the following diagonal elements:
$$Z_{\lambda,\lambda}=z^{\wall \,d_\lambda}, \ \ H_{\lambda,\lambda}= {\rrr \hbar^{\lfloor \ind_{\lambda} \cdot \wall\rfloor+\mathrm{rk}(\ind_{\lambda})/2
-\mathrm{rk}(\ind^{\mc}_{\lambda})/2} \det(P_{\lambda,0})^{-1/2}.}
$$
}
\end{Theorem}
Let us also denote 
$$
\tilde{T}_{\lambda,\mu}(a,z)=\dfrac{\left.\Stab^{Ell}_{X,\fC,P}(\lambda)\right|_{\mu}}{\left.\Stab^{Ell}_{X,\fC,P}(\mu)\right|_{\mu}}, \ \ \ \lambda,\mu \in X^{\bT}.
$$
We note that $X^{\mc}$ may have nontrivial fixed components (i.e. not just $X^{\bT}$) only if $b\leq n$. The above theorem then gives:
\begin{Corollary}
{\it The limits are non-trivial:	
$$
\lim\limits_{z\to 0} Z \Big(\lim_{q\to 0} \tilde{T}(a q^{\wall},z)\Big)  Z^{-1}\neq \mathrm{Id}
$$
($\mathrm{Id}$ denotes the identity matrix of size $|X^{\bT}|$) only for:
$$
\wall \in \Big\{ \dfrac{a}{b}\in \matQ: \mathrm{gcd}(a,b)=1, \ 1 \leq b\leq n  \Big\}.
$$}	
\end{Corollary}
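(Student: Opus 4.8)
The plan is to combine Theorem~\ref{hsthm} with a purely combinatorial injectivity statement about contents of Young diagrams modulo $b$. Fix $\wall=a/b$ in lowest terms, so that $b$ is exactly the order of $\mc$. By the support condition for the elliptic stable envelope used in the proof of Theorem~\ref{manth}, the limiting class attached to a fixed point $\lambda$ is supported on $\textsf{Attr}^{f}_{X^{\mc}}(\lambda)$, which lies inside the single fixed component $X(n_0,\dots,n_{b-1})$ containing $\lambda$. Hence the matrix $M:=\lim_{z\to0}Z\big(\lim_{q\to0}\tilde T(a q^{\wall},z)\big)Z^{-1}$ is block-diagonal with respect to the decomposition $X^{\bT}=\coprod X(n_0,\dots,n_{b-1})^{\bT}$, and on each block it equals $H\tilde K^{+}H^{-1}$ by Theorem~\ref{hsthm}. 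A block indexed by a component carrying a single $\bT$-fixed point is the $1\times1$ identity, and the off-diagonal entries between distinct components vanish by the support statement just recalled. Therefore $M=\mathrm{Id}$ as soon as every component $X(n_0,\dots,n_{b-1})$ contains at most one $\bT$-fixed point, and it suffices to prove that this happens whenever $b>n$; the Corollary is then the contrapositive. I would also remark that the threshold is sharp, since for $b\le n$ a full hook of size $n$ occupies $\ge b$ consecutive contents and content collisions modulo $b$ appear.

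The key step, and the main obstacle, is the combinatorial claim: \emph{if $b>n$ then two partitions $\lambda,\mu$ of $n$ with the same content distribution modulo $b$ coincide.} I would argue as follows. For a partition $\lambda$ of $n$ write $f_{\lambda}(c)=\#\{\Box\in\lambda:\ c(\Box)=c\}$ for the length of its $c$-th diagonal, so that the content distribution modulo $b$ is $g_{\lambda}(r)=\sum_{c\equiv r}f_{\lambda}(c)$. Because the first row and first column of $\lambda$ meet in one cell, the nonzero diagonals occupy the contiguous content window $[-(\ell-1),\lambda_1-1]$ of cardinality $\lambda_1+\ell-1\le n<b$, where $\ell$ is the number of parts; this window always contains $c=0$. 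Having fewer than $b$ consecutive integers, its elements have pairwise distinct residues modulo $b$; moreover the nonnegative contents $0,1,\dots,\lambda_1-1$ land on residues $0,1,\dots,\lambda_1-1$ and the negative contents $-1,\dots,-(\ell-1)$ on residues $b-1,\dots,b-(\ell-1)$, and these two arcs are disjoint and separated by residues on which $g_{\lambda}$ vanishes (no internal zeros occur because every content in the window is realized). Hence $g_{\lambda}$ recovers $f_{\lambda}$: read off $f_{\lambda}(0),f_{\lambda}(1),\dots$ from residues $0,1,2,\dots$ up to the first zero, and $f_{\lambda}(-1),f_{\lambda}(-2),\dots$ from residues $b-1,b-2,\dots$ up to the first zero.

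It then remains to see that the diagonal-length function $f_{\lambda}$ determines $\lambda$. For $c\ge0$ one has $f_{\lambda}(c)=\#\{i:\ \lambda_i-i\ge c\}$, and since the integers $\lambda_i-i$ strictly decrease in $i$, the restriction $f_{\lambda}|_{c\ge0}$ recovers the arm coordinates $\{\lambda_i-i\ge0\}$; by conjugation $f_{\lambda}|_{c\le0}$ recovers the leg coordinates $\{\lambda'_i-i\ge0\}$, and the Frobenius data $(\lambda_i-i\mid\lambda'_i-i)$ determines $\lambda$. Equivalently, since $n<b$ no partition of $n$ admits a $b$-rim hook, so each is its own $b$-core and the residues of its contents encode the abacus of the core. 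This establishes the lemma: for $b>n$ the assignment $\lambda\mapsto(n_0,\dots,n_{b-1})$ is injective on partitions of $n$, so every component $X(n_0,\dots,n_{b-1})$ is a single $\bT$-fixed point, whence $M=\mathrm{Id}$ and the Corollary follows.
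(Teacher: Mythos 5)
Your proof is correct and follows essentially the same route as the paper: the paper simply remarks that $X^{\mc}$ can have fixed components other than isolated points of $X^{\bT}$ only when $b\leq n$, and then invokes Theorem \ref{hsthm}, exactly as you do via block-diagonality (from the support condition in the proof of Theorem \ref{manth}) plus the $1\times 1$ normalization $\tilde{K}_{\lambda,\lambda}=1$. The only difference is that you supply full details for the step the paper leaves implicit, namely the combinatorial lemma that for $b>n$ the content-residue vector $(n_0,\dots,n_{b-1})$ determines the partition (equivalently, every partition of $n<b$ is its own $b$-core), and your argument for it is sound.
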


\section{Finite subgroups of framing torus}
\subsection{}
For this section $X(\nn,\rr)$ denotes a Nakajima quiver variety
with the dimension vector $\nn=(n_1,\dots, n_l)$ and the framing dimensions 
$\rr=(r_1,\dots, r_l)$ where $l$ is the number of vertices in the quiver (see \cite{GinzburgLectures,NakALE} for introductions to quiver varieties). 
The framing torus acting on $X(\nn,\rr)$ has the form
$$
\bA=(\matC^{\times})^{r_1}\times \dots \times (\matC^{\times})^{r_l}.
$$
We denote by $a_1,\dots,a_{|\rr|}$ with $|\rr|=r_1+\dots+r_l$ the coordinates on $\bA$.  We fix the hyperplane arrangement in $\Lie_{\matQ}(\bA)$ defined by the equations:
$$
H^{(n)}_{i,j}=\{ \tilde{a}_i-\tilde{a}_j=n \} \subset \Lie_{\matQ}(\bA), \ \ i,j \in I, \ \ n\in \matZ,
$$ 
where $\tilde{a}_i$, $i \in I=\{1,\dots,|\rr|\}$ denote the corresponding coordinates on~$\Lie_{\matQ}(\bA)$. 

{\rrr For a subset of indices $J \subset I$ we associate a one-dimensional subtorus
$$
\iota_{J}: \matC^{\times} \hookrightarrow \bA
$$
defined by $a_i=z$ if $i\in J$ or $a_i=1$ otherwise, where $z$ denotes the cordinate on $\mathbb{C}^{\times}$. }

\subsection{}
{\rrr Let us fix a point $\wall \in \Lie_{\matQ}(\bA)$. We say that two indices $i,j \in I$ are equivalent if $\wall\in H^{(n)}_{i,j}$  for some $n$. This equivalence relation induces a decomposition of the set of indices: 
\be \label{isubs}
I=I_{1}\cup \dots \cup I_{m}
\ee
Note that
\be \label{rdec}
\rr=\rr_1+\dots+\rr_m
\ee
with $|\rr_k|=|I_k|$. 
Let $\bZ(\wall) \subset \bA$ be the $m$-dimensional subtorus 
given by 
\be \label{torincl}
\iota_{I_1}\times \dots \times \iota_{I_m}: (\mathbb{C}^{\times})^{m} \hookrightarrow \bA. 
\ee
For example, if $\wall=0$ then all indexes are equivalent to each other, i.e., $m=1$ and (\ref{isubs}) takes the form
$
I=I_1.
$
Thus $\bZ(\wall)\cong\mathbb{C}^{\times}$. By definition (\ref{torincl}) this subtorus acts by scaling all framing spaces with the same weight, thus it acts trivially on $X$.

The other extreme case is when $\wall$ is generic, i.e., does not belong to any of the walls $H^{(n)}_{i,j}$. In this case $m=|\rr|$ and each subset in (\ref{isubs}) contains only one index.  In this case $\bZ(\wall)=\bA$.}

We recall the following well known property of quiver varieties (the tensor product structure):
\begin{Lemma} \label{framlem}
{\it The fixed point set of the torus $\bZ(\wall)$ has the following form:
$$
X(\nn,\rr)^{\bZ(\wall)}=\coprod_{\nn_1+\dots+\nn_m=\nn} \, X(\nn_1,\rr_1)\times \dots \times X(\nn_m,\rr_m).
$$	
The $\bA$-weights appearing in the normal bundle $N_{X(\nn,\rr)^{\bZ(\wall)}}$ are  of the form $a_i/a_j$ with
$i$ and $j$ from different subsets of decomposition (\ref{isubs}). }
\end{Lemma}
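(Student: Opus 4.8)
The plan is to realize $X(\nn,\rr)$ as a symplectic reduction and to track how $\bZ(\wall)$ acts on the linear data. Writing $V_i=\matC^{n_i}$ and $W_i=\matC^{r_i}$, I would present $X(\nn,\rr)$ as the GIT quotient of $T^{*}\mathrm{Rep}$ by $G=\prod_i GL(V_i)$, where $\mathrm{Rep}=\bigoplus_{e:i\to j}\mathrm{Hom}(V_i,V_j)\oplus\bigoplus_i\mathrm{Hom}(W_i,V_i)$. The framing torus $\bA$ acts only on $W=\bigoplus_i W_i$, scaling the coordinate line $\matC\, w_k$ by $a_k$, while the edge maps and the gauge group carry no $\bA$-weight. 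First I would make $\bZ(\wall)$ explicit: since $\wall\in H^{(n)}_{i,j}$ exactly when $\wall_i-\wall_j=n$, there is some $n$ with $H^{(n)}_{i,j}\in B_{\wall}$ iff $\wall_i-\wall_j\in\matZ$, so the partition (\ref{isubs}) is the equivalence relation ``$i\sim j$ iff $\wall_i-\wall_j\in\matZ$''. From $\Lie(\bZ(\wall))=\bigcap_{H^{(n)}_{i,j}\in B_{\wall}}H^{(0)}_{i,j}$ one then identifies $\bZ(\wall)=\{a\in\bA:\,a_i=a_j\text{ whenever }i\sim j\}\cong(\matC^{\times})^{m}$. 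Consequently $W$ splits into $m$ pairwise non-isomorphic $\bZ(\wall)$-isotypic pieces $W=\bigoplus_{s=1}^{m}W^{(s)}$ with $W^{(s)}=\bigoplus_{k\in I_s}\matC\, w_k$, matching (\ref{rdec}).

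Next I would decompose a $\bZ(\wall)$-fixed point. Fixedness on the quotient means the $\bZ(\wall)$-action is absorbed by a cocharacter into $G$, so $V$ acquires a $\bZ(\wall)$-weight grading $V=\bigoplus_{s}V^{(s)}$. The edge maps, being $\bZ(\wall)$-invariant, preserve this grading; the framing maps, being equivariant, send $W^{(s)}$ into $V^{(s)}$ and $V^{(s)}$ into $W^{(s)}$; and $\theta_{\pm}$-stability forces every $\bZ(\wall)$-weight occurring in $V$ to occur already in $W$, so only the $m$ weights survive. Hence the point splits as a direct sum of $m$ subrepresentations of the same quiver with dimension vectors $\nn_s=\dim V^{(s)}$ and framings $\rr_s=\dim W^{(s)}$, whence $|\rr_s|=|I_s|$, and conversely any such tuple reassembles to a $\bZ(\wall)$-fixed point. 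The one genuinely nonformal point is that the moment-map equation and the $\theta_{\pm}$-(semi)stability condition decouple along the grading, so that a tuple is (semi)stable precisely when each block is; this is Nakajima's tensor-product structure, which I would cite. I expect this compatibility to be the main obstacle, since one must check that no off-diagonal terms survive in the moment map and that stability never couples distinct blocks.

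Finally, for the normal bundle I would use the description of the tangent space as the middle cohomology of the Nakajima complex, assembled from the terms $\mathrm{Hom}(V^{(s)},V^{(s')})$ and the framing terms $\mathrm{Hom}(W^{(s)},V^{(s')})$, $\mathrm{Hom}(V^{(s)},W^{(s')})$, together with their $\hbar$-shifted cotangent partners. The $\bZ(\wall)$-invariant part is the diagonal $s=s'$ contribution, which reproduces the tangent space of $\prod_s X(\nn_s,\rr_s)$, so the normal bundle is exactly the off-diagonal part with $s\neq s'$. Since every weight occurring in $V^{(s)}$ or $W^{(s)}$ restricts on $\bZ(\wall)$ to the single character attached to the class $I_s$, each off-diagonal summand carries a weight whose $\bA$-part is $a_i\, a_j^{-1}$ with $i\in I_{s'}$ and $j\in I_s$, i.e. of the form $a_i/a_j$ with $i,j$ in different subsets of (\ref{isubs}) (the possible $\hbar$-factor from a cotangent partner does not affect this $\bA$-part). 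Once the decomposition of the previous step is in hand, this last step is pure weight bookkeeping.
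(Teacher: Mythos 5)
Your proposal is correct, and it is essentially the paper's approach: the paper's entire proof is the citation ``See Section 2.4 in \cite{MO}'', and what you have written out (absorbing the $\bZ(\wall)$-action into the gauge group to grade $V$, using $\theta_{\pm}$-stability to force the weights of $V$ to lie among those of $W$, decoupling the moment map and stability along the grading, and reading off the off-diagonal $\bA$-weights $a_i/a_j$ from the Nakajima tangent complex) is precisely the tensor-product-structure argument that citation points to. Your flagged reliance on Nakajima's tensor product structure for the decoupling step matches what the paper itself does, so there is no gap relative to the paper's own standard of proof.
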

\begin{proof}
	See Section 2.4 in \cite{MO}.
\end{proof}

For example, if $\wall=0$ then  
$X(\nn,\rr)^{\bZ(\wall)}=X(\nn,\rr)$. For generic $\wall$ $X(\nn,\rr)^{\bZ(\wall)}=X(\nn,\rr)^{\bA}$. 

Informally speaking, we have the following picture. For each point $\wall\in \Lie_{\matQ}(\bA)$ we associate a subvariety $X(\nn,\rr)^{\bZ(\wall)}$ in $X(\nn,\rr)$.
For a  point $\wall$ which is in the complement of all hyperplanes, this subvariety is simply $X(\nn,\rr)^{\bA}$. If $\wall$ arrives at a hyperplane then the subvariety gets larger. Further, if $\wall$ is at an intersection of two hyperplanes the fixed point set gets even larger and so on. Finally, when we arrive at the intersection of maximal number of hyperplanes the corresponding variety gets maximally  large, i.e., $X(\nn,\rr)$.

\subsection{}

Let $\fC$ and $P$ be a choice of a chamber and a polarization for a  quiver variety $X(\nn,\rr)$. We denote the $\bZ(\wall)$-invariant part of $P$ by $P(\wall)$. Clearly,
$$
P(\wall)=\bigoplus_{i=0}^{m} P_{i}
$$  
where $P_i$ is a polarization for $X(\nn_i,\rr_i)$. We denote by 
$\ind^{\wall}_{\lambda}$ the index of $\lambda$ associated with $P(\wall)$ and the chamber $\fC$.

As the varieties $X(\nn,\rr)$ and $X(\nn_i,\rr_i)$ are all associated to the same quiver, the map $\kappa$ is an isomorphism and we write $\mathfrak{D}'=\mathfrak{D}$.

For a point $\wall\in \Lie_{\matQ}(\bA)$, as in the previous section,
we denote by $\omega_{\wall}$ the translation acting on sections of line bundles over $\cE_{\bA}$ by:
\be \label{wsh}
\omega^{*}_{\wall}f(a_1,\dots, a_{|\rr|})=f(a_1 q^{\wall_1},\dots, a _{|\rr|} q^{\wall_{|\rr|}}). 
\ee
Theorem \ref{manth} then gives:

\begin{Theorem} \label{manth2}
	{\it For any $\wall\in \Lie_{\matQ}(\bA)$ we have:
		\begin{align} \label{mainlim2}
		&\Lambda^{\!\bullet}(\bar{P}(\wall)) \circ \lim\limits_{z\to 0_{\mathfrak{D}}} \left( z^{\chi(\wall,\cdot)-\chi_{\lambda}(\wall,\cdot)}  \lim\limits_{q\to 0}  \omega^{*}_{\wall} \circ i^{*} \left( \dfrac{\Stab^{Ell}_{X(\nn,\rr),\fC,P} (\lambda) }{\Theta(P)}  \right) \right) \circ \det(P_{0,\lambda})^{1/2} \\ \nonumber
		& =\hbar^{\lfloor \ind_{\lambda}\cdot \wall\rfloor +\mathrm{rk}(\ind_{\lambda})/2- \mathrm{rk}(\ind^{\mc}_{\lambda})/2}\, \Stab^{\mathfrak{D}}_{X(\nn,\rr)^{\bZ(\wall)},\fC,P(\wall)}(\lambda),
		\end{align}
		where $X(\nn,\rr)^{\bZ(\wall)}$ is described in Lemma \ref{framlem}.
		}
\end{Theorem}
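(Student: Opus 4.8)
The plan is to run the proof of Theorem~\ref{manth} essentially unchanged, replacing the cyclic subgroup $\mc$ everywhere by the subtorus $\bZ(\wall)$. The point is that cyclicity of $\mc$ enters that proof only through the dichotomy of $\bA$-characters into those for which $\vartheta(a^{\sigma}\dots)$ retains $a$-dependence in the limit $q\to 0$ after the shift $\omega_{\wall}$ and those for which it does not; by~(\ref{thetlim}) this is governed precisely by whether $\langle\sigma,\wall\rangle\in\matZ$. First I would record that, by Corollary~\ref{balcor}, the restrictions
$$
E_{\lambda,\mu}(a,z)=\left.i^{*}\!\left(\frac{\Stab^{Ell}_{X(\nn,\rr),\fC,P}(\lambda)}{\Theta(P)}\right)\right|_{\mu}
$$
are balanced in $a$, so Lemma~\ref{wlim} guarantees that $\lim_{q\to 0}E_{\lambda,\mu}(aq^{\wall},z)$ exists. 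Lemma~\ref{framlem} then supplies the geometric meaning of the resonance condition: the characters of $N_{X(\nn,\rr)^{\bZ(\wall)}}$ are exactly the $a_i/a_j$ with $i,j$ in distinct blocks of~(\ref{isubs}), for which $\langle\sigma,\wall\rangle=\wall_i-\wall_j\notin\matZ$, while the characters of the $\bZ(\wall)$-invariant part all satisfy $\langle\sigma,\wall\rangle\in\matZ$. Thus the surviving $a$-dependence comes exactly from the $\bZ(\wall)$-invariant factors, and $\bZ(\wall)$ plays the role that $\mc$ played before.

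With this identification the remaining steps are formal. The surviving poles of $\lim_{q\to0}\omega^{*}_{\wall}E_{\lambda,\mu}$ come from the resonant factors of $\Theta(P)$, which are exactly those of $\Theta(P(\wall))$; tensoring with $\Lambda^{\!\bullet}(\bar{P}(\wall))$ cancels them and yields a class
$$
K(\lambda)=\Lambda^{\!\bullet}(\bar{P}(\wall))\circ\lim_{q\to0}\omega^{*}_{\wall}\circ i^{*}\!\left(\frac{\Stab^{Ell}_{X(\nn,\rr),\fC,P}(\lambda)}{\Theta(P)}\right)\circ\det(P_{\lambda,0})^{1/2}
$$
holomorphic in $a$. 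The elliptic support condition (Section~3.3.5 of~\cite{AOElliptic}) gives $\textrm{Supp}(K(\lambda))\subset X(\nn,\rr)^{\bZ(\wall)}\cap\textsf{Attr}^{f}_{X(\nn,\rr)}(\lambda)=\textsf{Attr}^{f}_{X(\nn,\rr)^{\bZ(\wall)}}(\lambda)$. For the diagonal I would repeat Lemmas~\ref{lllem} and~\ref{diflem}, which use only~(\ref{thetlim}) and therefore hold for arbitrary $\wall$ once $\mc$-invariance is read as $\bZ(\wall)$-invariance; using $\left.\Stab^{Ell}_{X(\nn,\rr),\fC,P}(\lambda)\right|_{\lambda}=\Theta(N^{-}_{\lambda})$ this gives
$$
\left.K(\lambda)\right|_{\lambda}=(-1)^{\mathrm{rk}(\ind_{\lambda}-\ind^{\wall}_{\lambda})}\,\frac{\hbar^{\lfloor\ind_{\lambda}\cdot\wall\rfloor+\mathrm{rk}(\ind_{\lambda})/2}}{\det(\ind_{\lambda})}\,\Lambda^{\!\bullet}(\bar{N}^{-}_{X(\nn,\rr)^{\bZ(\wall)},\lambda}),
$$
which matches the K-theoretic normalization~(\ref{ksdia}) up to the scalar $(-1)^{\mathrm{rk}(\ind_{\lambda}-\ind^{\wall}_{\lambda})}\hbar^{\lfloor\ind_{\lambda}\cdot\wall\rfloor}$ recorded in~(\ref{mainlim2}).

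Finally, the off-diagonal $\bA$-degree bound that pins down the K-theoretic stable envelope (Section~9.1.9 of~\cite{pcmilect}) is obtained, as in Theorem~\ref{manth}, by taking the $\bZ(\wall)$-invariant part of the bound~(\ref{bound}) furnished by Corollary~\ref{kthcorol}, the limits $z\to 0_{\mathfrak{D}}$ existing by Lemma~\ref{limexlem}. Support, diagonal normalization and degree bound being the three defining properties, the uniqueness statement (Proposition~9.2.2 of~\cite{pcmilect}) identifies $\lim_{z\to 0_{\mathfrak{D}}}z^{\chi(\wall,\cdot)-\chi_{\lambda}(\wall,\cdot)}K(\lambda)$ with $\Stab^{\mathfrak{D}}_{X(\nn,\rr)^{\bZ(\wall)},\fC,P(\wall)}(\lambda)$ up to that scalar, which is~(\ref{mainlim2}). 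The step I expect to demand the most care is not the theta-function bookkeeping but the geometric input underlying the support identity $X(\nn,\rr)^{\bZ(\wall)}\cap\textsf{Attr}^{f}_{X(\nn,\rr)}(\lambda)=\textsf{Attr}^{f}_{X(\nn,\rr)^{\bZ(\wall)}}(\lambda)$ and the compatibility of the chamber $\fC$ and the descended slope chamber $\mathfrak{D}$ with the tensor-product splitting $X(\nn,\rr)^{\bZ(\wall)}=\coprod X(\nn_1,\rr_1)\times\cdots\times X(\nn_m,\rr_m)$ of Lemma~\ref{framlem}; checking that the attracting geometry of $X(\nn,\rr)$ restricts correctly to this product is where the genuine content beyond Theorem~\ref{manth} lies.
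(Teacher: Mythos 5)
Your proposal is correct, but it takes a genuinely different route than the paper. You re-run the whole proof of Theorem \ref{manth} with $\bZ(\wall)$ substituted for $\mc$, justifying each step by the observation that cyclicity enters only through the resonance dichotomy $\langle\sigma,\wall\rangle\in\matZ$ versus $\langle\sigma,\wall\rangle\notin\matZ$, with Lemma \ref{framlem} guaranteeing that the resonant characters are exactly those of the $\bZ(\wall)$-invariant part. The paper uses the same geometric input to make a one-step reduction instead: since $\wall$ is rational, the element $a\mapsto a e^{2\pi i\wall}$ generates a \emph{finite cyclic} subgroup $\mc\subset\bA$, and Lemma \ref{framlem} together with the definition of the hyperplane arrangement ($\wall_i-\wall_j\in\matZ$ iff $i,j$ lie in the same block of (\ref{isubs})) yields $X(\nn,\rr)^{\mc}=X(\nn,\rr)^{\bZ(\wall)}$; the shift (\ref{wsh}) then satisfies the conditions of Section \ref{wshift} for this $\mc$, so Theorem \ref{manth} applies verbatim with $P^{\mc}=P(\wall)$ and $\ind^{\mc}_{\lambda}=\ind^{\wall}_{\lambda}$. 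The paper's reduction buys brevity and keeps Theorem \ref{manth} a black box, so none of the analytic machinery (balancedness, existence of limits, support, normalization, degree bound, uniqueness) has to be re-verified; your re-proof buys nothing extra here, precisely because the two fixed loci coincide, and it obliges you to recheck details that the reduction sidesteps --- e.g.\ the opening step of the proof of Theorem \ref{manth}, which reduces to $\bA\cong\matC^{\times}$ by choosing a one-parameter subtorus containing $\mc$, a move that is phrased for cyclic groups and would need rewording for a possibly higher-dimensional $\bZ(\wall)$. Finally, the two points you single out as ``the genuine content beyond Theorem \ref{manth}'' are already disposed of once the reduction is made: the attracting-set identity is word-for-word the statement (\ref{support}) in the proof of Theorem \ref{manth} after $X^{\mc}=X^{\bZ(\wall)}$ is established, and the slope-chamber compatibility is settled before the theorem by the remark that $\kappa$ is an isomorphism for varieties attached to the same quiver, so that $\mathfrak{D}'=\mathfrak{D}$.
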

\begin{proof}
{\rrr We have $\wall=(\wall_{1},\dots,\wall_{|r|})$ with $\wall_i\in \matQ$. We consider a cyclic subgroup of $\mc \subset \bA$ generated by the element:
\be \label{nudef}
(e^{2\pi \sqrt{-1} \wall}): = (e^{2\pi \sqrt{-1} \wall_1},\dots,  e^{2\pi \sqrt{-1} \wall_{|\rr|}}) \in \bA.
\ee
Clearly  $\mc \subset \bZ(\wall)$ and thus we have an $\bA$-equivariant embedding $X(\nn,\rr)^{\bZ(\wall)}\subset X(\nn,\rr)^{\mc}$.

If  $X(\nn,\rr)^{\bZ(\wall)}\neq X(\nn,\rr)^{\mc}$ then we have a non-zero rank, $\bA$-equivariant normal bundle   $N$ to $X(\nn,\rr)^{\bZ(\wall)}$ in $X(\nn,\rr)^{\mc}$. By Lemma \ref{framlem} all the $\bA$ - weights appearing in $N$ are of the form $a_i/a_j$ with indices $i$ and $j$ from different subsets in decomposition (\ref{isubs}). The subgroup $\mc$ acts on such weights via
$$
a_i/a_j \to a_i/a_j e^{2 \pi \sqrt{-1} (\wall_i-\wall_j)}.
$$
Since this action must be trivial we have $\wall_i-\wall_j=n\in \mathbb{Z}$, which means that $\wall\in H^{(n)}_{i,j}$. Thus $i$ and $j$ must be from the same subset in decomposition (\ref{isubs}).  We arrive at a contradiction,  thus $X(\nn,\rr)^{\bZ(\wall)}= X(\nn,\rr)^{\mc}$

Finally, we see that  the shift (\ref{wsh}) satisfies the conditions described in Section \ref{wshift} for $\mc$, 
 and the result follows from Theorem \ref{manth}.  }
\end{proof}

\section*{Conflict of interests} 
On behalf of all authors, the corresponding author states that there is no conflict of interest.

\bibliographystyle{abbrv}
\bibliography{bib}

\newpage

\vspace{12 mm}

\noindent
Yakov Kononov\\
Department of Mathematics,\\
Columbia University,\\
New York, NY 10027, USA\\
{\it and}\\
Department of Mathematics,\\
Yale University,\\
New Haven, CT 06511, USA
ya.kononoff@gmail.com

\vspace{3 mm}

\noindent
Andrey Smirnov\\
Department of Mathematics,\\
University of North Carolina at Chapel Hill,\\
Chapel Hill, NC 27599-3250, USA\\
{\it and}\\
Steklov Mathematical Institute \\
of Russian Academy of Sciences, \\
Gubkina str. 8, Moscow, 119991, Russia \\
asmirnov@email.unc.edu

\end{document}